\providecommand{\U}[1]{\protect\rule{.1in}{.1in}}
\newtheorem{thm}{Theorem}
\newtheorem{remark}{Remark}
\newtheorem{lemma}{Lemma}
\newtheorem{definition}{Definition}
\newtheorem{prop}{Proposition}
\newtheorem{coro}{Corollary}
\begin{document}

\title{Linear Convergence of Adaptively Iterative Thresholding Algorithms for Compressed Sensing}
\author{Yu Wang, ~Jinshan Zeng$^{\ast}$, ~Zhimin Peng, ~Xiangyu Chang, ~and Zongben Xu
\thanks{ Y. Wang, J.S. Zeng and Z.B. Xu are with the Institute for Information and System
Sciences, School of Mathematics and Statistics, Xi'an Jiaotong University,
Xi'an 710049 (email: shifwang@gmail.com, jsh.zeng@gmail.com,zbxu@mail.xjtu.edu.cn).

Z.M. Peng is with the Department of Mathematics, University of California, Los Angeles (UCLA), Los Angeles, CA 90095, United States (email: zhimin.peng@math.ucla.edu).

X.Y. Chang is with the School of Management, Xi'an Jiaotong University, Xi'an 710049.
(email: xiangyuchang@gmail.com).

$*$ Corresponding author: Jinshan Zeng (jsh.zeng@gmail.com). }}
\maketitle

\begin{abstract}
This paper studies the convergence of the adaptively iterative thresholding (AIT) algorithm for compressed sensing.
We first introduce a generalized restricted isometry property (gRIP). Then we prove that the AIT algorithm converges to the original sparse solution at a linear rate under a certain gRIP condition in the noise free case. While in the noisy case, its convergence rate is also linear until attaining a certain error bound.
Moreover, as by-products, we also provide some sufficient conditions for the convergence of the AIT algorithm based on
the two well-known properties, i.e., the coherence  property and the restricted isometry property (RIP), respectively.
It should be pointed out that such two properties are special cases of gRIP.
The solid improvements on the theoretical results are demonstrated and compared with the known results.
Finally, we provide a series of simulations to verify the correctness of the theoretical assertions as well as the effectiveness of the AIT algorithm.
\newline

\end{abstract}




\markboth{ ~}{Shell \MakeLowercase{\textit{et al.}}: Bare Demo of IEEEtran.cls for Journals}





\begin{IEEEkeywords}
restricted isometric property, coherence, iterative hard thresholding, SCAD, compressed sensing, sparse optimization
\end{IEEEkeywords}

\IEEEpeerreviewmaketitle

\section{Introduction}


Let  $A\in \mathbf{R}^{m \times n}$, $b\in \mathbf{R}^m$ and $x \in \mathbf{R}^n$. Compressed sensing {\cite{Donoho06}}, {\cite{Candes06}} solves the following constrained $\ell_0$-minimization problem
\begin{equation}
\min_{x\in \mathbf{R}^n} {\|x\|_0} \ \ \text{s.t.}\ b=A x + \epsilon,\ \|\epsilon\|_2\leq \sigma
\label{L0MinExact}
\end{equation}
where $\epsilon \in \mathbf{R}^m$ is the measurement noise, $\sigma\in \mathbf{R}$ is the noise variance and $\|x\|_0$ denotes the number of the nonzero components of $x$. Due to the NP-hardness of problem (\ref{L0MinExact}) \cite{MallatMP1993}, approximate methods including the greedy method and relaxed method are introduced. The greedy method approaches the sparse solution by successively alternating one or more components
that yield the greatest improvement in quality {\cite{MallatMP1993}}.
These algorithms include iterative hard thresholding (IHT) \cite{FoucartRIP2010}, accelerated hard thresholding (AHT) \cite{ Cevher2011}, ALPS \cite{kyrillidis2011recipes}, hard thresholding pursuit (HTP) \cite{foucart2011hard}, CLASH \cite{kyrillidis2012combinatorial}, OMP {\cite{PatiOMP1993}}, {\cite{TroppOMP2007}}, StOMP {\cite{DonohoStOMP}}, ROMP {\cite{Needell2010ROMP}}, CoSaMP {\cite{CoSaMP}} and SP {\cite{DaiSP}}.
The greedy algorithms can be quite efficient and fast in many applications, especially when the signal is very sparse.

The relaxed method converts the combinatorial $\ell_0$-minimization into a more tractable model through replacing the $\ell_0$ norm with a nonnegative and continuous function $P(\cdot)$, that is,
\begin{equation}
\min_{x\in \mathbf{R}^n} {P(x)} \ \ \text{s.t.}\ b=A x + \epsilon, \ \|\epsilon\|_2\leq \sigma.
\label{PMinExact}
\end{equation}
One of the most important cases is the $\ell_1$-minimization problem
(also known as \textit{basis pursuit} (BP)) {\cite{Donoho98}} in the noise free case and \textit{basis pursuit denoising} in the noisy case)
with $P(x) = \|x\|_1$,
where $\|x\|_1 = \sum_{i=1}^n |x_i|$ is called the $\ell_1$ norm.
The $\ell_1$-minimization problem is a convex optimization problem that can be efficiently solved.
Nevertheless, the $\ell_1$ norm may not induce further sparsity when applied to certain applications
{\cite{Chartrand2007}}, {\cite{Chartrand2008}}, {\cite{L1/2TNN}}, {\cite{Candes2008RL1}}.
Therefore, many nonconvex functions were proposed as substitutions of the $\ell_0$ norm.
Some typical nonconvex examples include the $\ell_q$ ($0<q<1$) norm {\cite{Chartrand2007}}, {\cite{Chartrand2008}}, {\cite{L1/2TNN}},
smoothly clipped absolute deviation (SCAD) {\cite{FanSCAD}} and minimax concave penalty (MCP) {\cite{ZhangMCP2010}}.
Compared with the $\ell_1$-minimization model, the nonconvex relaxed models can often induce better sparsity and reduce the bias,
while they are generally more difficult to solve.

The iterative reweighted method and regularization method are two main classes of algorithms to solve (\ref{PMinExact}) when $P(x)$ is nonconvex.
The iterative reweighted method includes the iterative reweighted least squares minimization (IRLS) {\cite{FOCUSS}}, {\cite{DaubechiesIRLS}},
and the iterative reweighted $\ell_1$-minimization (IRL1) algorithms {\cite{Candes2008RL1}}.
Specifically, the IRLS algorithm solves a sequence of weighted least squares problems, which can be viewed as some approximations to the original optimization problem.
Similarly, the IRL1 algorithm solves a sequence of non-smooth weighted $\ell_1$-minimization problems,
and hence it is the non-smooth counterpart to the IRLS algorithm.
However, the iterative reweighted algorithms are slow if the nonconvex penalty
cannot be well approximated by the quadratic function or the weighted $\ell_1$ norm function.
The regularization method transforms problem (\ref{PMinExact}) into the following unconstrained optimization problem
\begin{equation}
\min_{x\in \mathbf{R}^n} \{\|Ax - b\|_2^2+\lambda P(x)\},
\label{PRegProb}
\end{equation}
where $\lambda>0$ is a regularization parameter.
For some special penalties $P(x)$ such as the $\ell_q$ norms ($q=0,1/2,2/3, 1$), SCAD and MCP,
an optimal solution of the model (\ref{PRegProb}) is a fixed point of the following equation
$$x=H(x-sA^T(Ax - b)),$$
where $H(\cdot)$ is a componentwise thresholding operator which will be defined in detail in the next section and $s>0$ is a step size parameter. This yields the corresponding iterative thresholding algorithm ({\cite{L1/2TNN}}, {\cite{DaubechiesSoft04}}, {\cite{L2/3Cao2013}}, {\cite{BlumensathHard08}}, {\cite{ZengConvHalf2014}}, {\cite{ZengConvJumping2014}})
$$x^{(t+1)}=H(x^{(t)}-sA^T(Ax^{(t)}-b)).$$
Compared to greedy methods and iterative reweighted algorithms,
iterative thresholding algorithms have relatively lower computational complexities {\cite{Qian20011}}, {\cite{Zeng20012SAR}}, {\cite{Zeng20013AccSAR}}.
So far, most of theoretical guarantees of the iterative thresholding algorithms were developed for the regularization model (\ref{PRegProb}) with fixed $\lambda$.
However, it is in general difficult to determine an appropriate regularization parameter $\lambda$.

Some adaptive strategies for setting the regularization parameters were proposed.
One strategy is to set the regularization parameter adaptively so that $\|x^{(t)}\|_0$ remains the same at each iteration.
This strategy was first applied to the iterative hard thresholding algorithm (called Hard algorithm for short henceforth) in {\cite{Blumensath08CS}},
and later the iterative soft thresholding algorithm{\cite{MalekiITA2009}} (called Soft algorithm for short henceforth) and the iterative half thresholding algorithm {\cite{L1/2TNN}} (called Half algorithm for short henceforth).
The convergence of  Hard algorithm was justified when $A$ satisfies the restricted isometry property (RIP) with $\delta_{3k^*}<\frac{1}{\sqrt{32}}$
{\cite{Blumensath08CS}}, where $k^*$ is the number of the nonzero components of the truely sparse signal.
Later, Maleki {\cite{MalekiITA2009}} investigated the convergence of both Hard and Soft algorithms in terms of the coherence.
Recently, Zeng et al. {\cite{ZengAIT2014}} generalized Maleki's results to a wide class of iterative thresholding algorithms.
However, most of guarantees in {\cite{ZengAIT2014}} are coherence-based and focus on the noise free case with the step size equal to 1.
While it has been observed that in practice, the AIT algorithm can have remarkable performances for noisy cases with a variety of step sizes.
In this paper, we develop the theoretical guarantees of the AIT algorithm with different step sizes in both noise free and noisy cases.

\subsection{Main Contributions}

The main contributions of this paper are the following.

\begin{enumerate}
\item[i)]
Based on the introduced gRIP, we give a new uniqueness theorem for the sparse signal (see Theorem {\ref{Unique_Thm}}), and then
show that the AIT algorithm can converge to the original sparse signal at a linear rate (See Theorem {\ref{conv_gRIP}}).
Specifically, in the noise free case, the AIT algorithm converges to the original sparse signal at a linear rate.
While in the noisy case, it also converges to the original sparse signal at a linear rate until reaching an error bound.

\item[ii)]
The tightness of our analyses is further discussed in two specific cases.
The coherence based condition for Soft algorithm
is the same as those required for both OMP and BP.
Moreover, the RIP based condition for Hard algorithm is $\delta_{3k^*+1}<\frac{\sqrt{5}-1}{2}\approx0.618$,
which is better than the results in \cite{foucart2011hard} and \cite{Foucart2013Introduction}.

\end{enumerate}

The rest of this paper is organized as follows.
In section II, we describe the adaptively iterative thresholding (AIT) algorithm.
In section III, we introduce the generalized restricted isometry property, and then provide a new uniqueness theorem.
In section IV, we prove the convergence of the AIT algorithm.
In section V, we compare the obtained theoretical results with some other known results.
In section VI, we implement a series of simulations to verify the correctness of the theoretical results as well as the efficiency of the AIT algorithm.
In section VII, we discuss many practical issues on the implementation of the AIT algorithm,
and then conclude this paper in section VIII.
All the proofs are presented in the Appendices.

{\bf Notations.}
We denote $\mathbf{N}$ and $\mathbf{R}$ as the natural number set and one-dimensional real space, respectively.
For any vector $x \in \mathbf{R}^n$, $x_i$ is the $i$-th component of $x$ for $i=1,\ldots, n.$
For any matrix $A \in \mathbf{R}^{m\times n}$, $A_i$ denotes the $i$-th column of $A$.
$x^T$ and $A^T$ represent the transpose of vector $x$ and matrix $A$ respectively.
For any index set $S\subset \{1,\ldots, n\} $, $|S|$ represents its cardinality.
$S^c$ is the complementary set, i.e., $S^c = \{1,\ldots, n\} \setminus S.$
For any vector $x\in \mathbf{R}^n$, $x_S$ represents the subvector of $x$ with the components restricted to $S$.
Similarly, $A_S$ represents the submatrix of $A$ with the columns restricted to $S$.
We denote $x^*$ as the original sparse signal with $\|x^*\|_0 = k^*$,
and $I^* = \{i: |x_i^*|\neq 0\}$ is the support set of $x^*.$
${\bf I}_r \in \mathbf{R}^{r\times r}$  is the $r$-dimensional identity matrix.
$sgn(\cdot)$ represents the signum function.

\section{Adaptively Iterative Thresholding Algorithm}

The AIT algorithm for (\ref{PRegProb})
is the following
\begin{equation}
z^{(t+1)} = x^{(t)} -  s A^{T}(A x^{(t)}-b),\\
\label{AIT1}
\end{equation}
\begin{equation}
x^{(t+1)} = H_{\tau^{(t+1)}}(z^{(t+1)}),
\label{AIT}
\end{equation}
where $s>0$ is a step size and
\begin{equation}
H_{\tau^{(t+1)}}(x) = (h_{\tau^{(t+1)}}(x_1),\ldots, h_{\tau^{(t+1)}}(x_n))^T
\end{equation}
is a componentwise thresholding operator. The thresholding function $h_{\tau}(u)$ is defined as
\begin{equation}
h_{\tau}(u)= \left\{
\begin{array}{cc}
f_{\tau}(u), & |u|> \tau \\
0, & {\rm otherwise}%
\end{array}%
\right.,
\label{ThreshFun}
\end{equation}%
where $f_{\tau}(u)$ is the \emph{defining function.}
In the following, we give some basic assumptions of the defining function, which were firstly introduced in {\cite{ZengAIT2014}}.

{\bf Assumption 1.} Assume that $f_{\tau}$ satisfies
\begin{enumerate}
  \item \textbf{Odevity}. $f_{\tau}(u)$ is an odd function of $u$.
  \item \textbf{Monotonicity}.  $f_{\tau}(u) < f_{\tau}(v)$ for any $\tau \leq u < v $.
  \item \textbf{Boundedness}.  There exist two constants $0\leq c_2\leq c_1\leq1$ such that $u - c_1 \tau \leq   f_{\tau}(u) \leq u-c_2\tau $  for $u\geq \tau$.
\end{enumerate}

Note that most of the commonly used thresholding functions satisfy Assumption 1.
In Fig. {\ref{Fig ThFun}}, we show some typical thresholding functions including hard {\cite{BlumensathHard08}},
soft {\cite{DaubechiesSoft04}} and half \cite{L1/2TNN} thresholding functions for $\ell_0, \ell_1, \ell_{1/2}$ norms respectively,
as well as the thresholding functions for $\ell_{2/3}$ norm \cite{L2/3Cao2013} and SCAD penalty {\cite{FanSCAD}}.
The corresponding boundedness parameters are shown in Table {\ref{TableBoundPar}}.

\begin{figure}
\centering
  \includegraphics[width=3in]{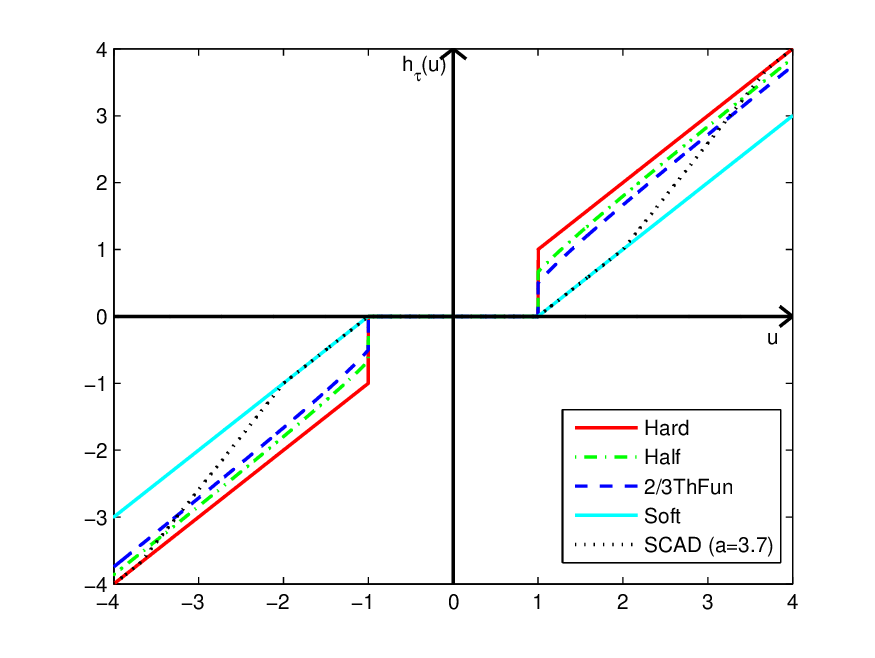}\\
  \caption{Typical thresholding functions $h_{\tau}(u)$ with $\tau=1$.}
  \label{Fig ThFun}
  \vspace*{-5pt}
\end{figure}

\begin{table} [h]
\centering
\caption{Boundedness parameters $c$ for different thersholding functions}
\label{TableBoundPar}
\begin{tabular}{cccccc}
\toprule
$f_{\tau,*}$   & $f_{\tau,0}$   & $f_{\tau,1/2}$     & $f_{\tau,2/3}$  & $f_{\tau,1}$  & $f_{\tau,SCAD}$   \\
\midrule
$c_1$            &  0        & $\frac{1}{3}$      & $\frac{1}{2} $     & 1             & 1  \\

$c_2$            &  0        & 0      & 0     & 1             & 0   \\
\bottomrule
\end{tabular}
\end{table}

This paper considers a heuristic way for setting the threshold $\tau^{(t)}$, specifically, we let
$$\tau^{(t)} =  |z^{(t)}_{[k+1]}|,$$
where $z^{(t)}_{[k+1]}$ is the $(k+1)$-th largest component of $z^{(t)}$ in magnitude and
$k$ is the \emph{specified sparsity level}, $[k+1]$ denotes the index of this component.
We formalise the AIT algorithm as in Algorithm 1.

\begin{table}[!htb]
\renewcommand{\arraystretch}{1.2}
\centering
\textbf{Algorithm 1: Adaptively Iterative Thresholding Algorithm}
\vspace{30pt}
\begin{tabular}{l}
\toprule
\textbf{\emph{Initialization}}:
{Normalize} $A$ such that $\|A_{j}\|_{2}=1$ for $j=1, \ldots, n$. \\ Given
a sparsity level $k$, a step size $s>0$ and an initial point $x^{(0)}$.\\
Let $t:=0$;  \\
\textbf{\emph{Step 1}}: Calculate $z^{(t+1)}=x^{(t)}-sA^{T}(Ax^{(t)}-b) $;\\
\textbf{\emph{Step 2}}: Set $\tau^{(t+1)} = |z^{(t+1)}_{[k+1]}|$ and $I^{t+1}$ as the index set of\\
 \ \ \ \ \ \ \ \ \ the largest $k$ components of $z^{(t+1)}$ in magnitude;\\

\textbf{\emph{Step 3}}: Update: if $i\in {I^{t+1}}$, $x^{(t+1)}_i=f_{\tau^{(t+1)}}(z^{(t+1)}_i)$ , otherwise \\
 \ \ \ \ \ \ \ \ \ \ $x^{(t+1)}_i=0$; \\
\textbf{\emph{Step 4}}: $t=t+1$ and repeat \textbf{\emph{Steps 1}}-\textbf{\emph{3}} until convergence.\\
\bottomrule
\end{tabular}
\vspace*{-40pt}
\end{table}

\begin{remark} \label{AIT_Alg}
At the $(t+1)$-th iteration,
the AIT algorithm yields a sparse vector $x^{(t+1)}$ with $k$ nonzero components.
The sparsity level $k$ is a crucial parameter for the performance of the AIT algorithm.  When $k\geq k^*$, the results will get better as $k$ decreases.
Once $k<k^*$, the AIT algorithm fails to find the original sparse solution.
Thus, $k$ should be specified as an upper bound estimate of $k^*$.
\end{remark}

\begin{remark}
In Algorithm 1, the columns of matrix $A$ are required to be normalized.
Such operation is only for a clearer definition of the following introduced generalized restricted isometry property (gRIP)
and more importantly, better theoretical analyses.
However, as shown in Section VII B, this requirement is generally not necessary
for the use of the AIT algorithm in the perspective of the recovery performance.
We will conduct a series of experiments in Section VII B for a detailed explanation.
\end{remark}

\section{Generalized Restricted Isometry Property}

This section introduces the generalized restricted isometry property (gRIP) and then gives the uniqueness theorem.

\begin{definition} For any matrix $A\in \mathbf{R}^{m\times n}$, and a constant pair $(p,q)$
where $p\in [1,\infty), q\in [1,\infty]$ and $\frac{1}{p}+\frac{1}{q}=1,$
then the $(k,p,q)$-generalized restricted isometry constant (gRIC) $\beta_{k,p,q}$ of $A$ is defined as
\begin{equation}
\beta_{k,p,q} = {\sup_{S\subset \{1,\ldots, n\}, \newline  |S|\leq k}} \ \sup_{x\in \mathbf{R}^{|S|}\setminus \{0\}} \frac{\|(\mathbf{I}_{|S|}-A_S^TA_S)x\|_{q}}{\|x\|_{p}}.
\label{Def_gRIP}
\end{equation}
\label{def_gRIP}
\end{definition}

We will show that the introduced gRIP satisfies the following proposition.

\begin{prop}
For any positive constant pair $(p,q)$ with $\frac{1}{p}+\frac{1}{q}=1$,
the generalized restricted isometric constant $\beta_{k,p,q}$ associated with $A$ and $k$ must satisfy
\begin{equation}
\frac{1}{3}\beta_{k,p,q}\leq
\sup_{z\in\mathbf{R}^n\setminus \{0\},\|z\|_0\leq k}\frac{\left|z^T(A^TA-\mathbf{I}_n)z\right|}{\|z\|_p^2}
\leq \beta_{k,p,q}.
\label{prop_gRIP_Exp}
\end{equation}
\label{prop_gRIP1}
\end{prop}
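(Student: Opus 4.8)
The plan is to reduce both sides of \eqref{prop_gRIP_Exp} to statements about the principal submatrices of $M:=A^TA-\mathbf{I}_n$. Note first that $M$ is symmetric, and that for any $z\in\mathbf{R}^n$ supported on an index set $S$ one has $z^T(A^TA-\mathbf{I}_n)z=z_S^TM_{S,S}\,z_S$ and $\|(\mathbf{I}_{|S|}-A_S^TA_S)z_S\|_q=\|M_{S,S}\,z_S\|_q$, where $M_{S,S}$ denotes the $S\times S$ principal submatrix of $M$. Consequently $\beta_{k,p,q}=\sup_{|S|\le k}\|M_{S,S}\|_{p\to q}$, the largest $\ell_p\!\to\!\ell_q$ operator norm over $k$-dimensional principal submatrices, while the middle term of \eqref{prop_gRIP_Exp} equals $\gamma:=\sup_{|S|\le k}\ \sup_{x\neq 0}\ |x^TM_{S,S}\,x|/\|x\|_p^2$. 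Thus the whole proposition follows once we show, for a fixed symmetric $B=M_{S,S}$, that $\tfrac{1}{3}\|B\|_{p\to q}\le\sup_{x\neq 0}|x^TBx|/\|x\|_p^2\le\|B\|_{p\to q}$.

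For the upper bound I would simply invoke H\"older's inequality: since $\tfrac{1}{p}+\tfrac{1}{q}=1$, every $z$ with $\|z\|_0\le k$ and support $S$ satisfies $|z^TMz|=|z_S^TM_{S,S}z_S|\le\|z_S\|_p\,\|M_{S,S}z_S\|_q\le\beta_{k,p,q}\|z_S\|_p^2=\beta_{k,p,q}\|z\|_p^2$; dividing by $\|z\|_p^2$ and taking the supremum over $z$ gives $\gamma\le\beta_{k,p,q}$ directly, without even passing through the submatrix reformulation.

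The lower bound is the only part needing care, and I would handle it by polarization together with the duality between $\ell_p$ and $\ell_q$. If $\beta_{k,p,q}=0$ the bound is trivial, so assume $\beta_{k,p,q}>0$ and fix $\varepsilon\in(0,\beta_{k,p,q})$; by the definition of $\beta_{k,p,q}$ as a supremum over pairs $(S,x)$ we may choose $S$ with $|S|\le k$ and $x\in\mathbf{R}^{|S|}$ with $\|x\|_p=1$ and $\|M_{S,S}x\|_q>\beta_{k,p,q}-\varepsilon$. Since the norm dual to $\|\cdot\|_q$ on $\mathbf{R}^{|S|}$ is precisely $\|\cdot\|_p$ (this is exactly where $\tfrac{1}{p}+\tfrac{1}{q}=1$ enters), choose $y\in\mathbf{R}^{|S|}$ with $\|y\|_p\le 1$ and $y^TM_{S,S}x=\|M_{S,S}x\|_q$. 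The symmetric polarization identity $y^TM_{S,S}x=\tfrac{1}{4}\big[(x+y)^TM_{S,S}(x+y)-(x-y)^TM_{S,S}(x-y)\big]$, together with $\|x\pm y\|_p\le\|x\|_p+\|y\|_p\le 2$ and the fact that the zero-extensions of $x\pm y$ to $\mathbf{R}^n$ are supported on $S$ with $|S|\le k$, gives $|(x\pm y)^TM_{S,S}(x\pm y)|\le\gamma\|x\pm y\|_p^2\le 4\gamma$, and hence $\beta_{k,p,q}-\varepsilon<\|M_{S,S}x\|_q\le\tfrac{1}{4}(4\gamma+4\gamma)=2\gamma$. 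Letting $\varepsilon\to 0$ yields $\beta_{k,p,q}\le 2\gamma$, i.e. $\gamma\ge\tfrac{1}{2}\beta_{k,p,q}$, which is in fact slightly stronger than the asserted $\gamma\ge\tfrac{1}{3}\beta_{k,p,q}$.

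In summary, the reformulation in terms of principal submatrices and the H\"older estimate are entirely routine; the substantive step is the lower bound, where the two things to watch are (i) using the duality between $\ell_p$ and $\ell_q$ to realize $\|M_{S,S}x\|_q$ as the value of a linear functional of unit $\ell_p$-norm, and (ii) checking that the polarized vectors $x\pm y$ stay supported on an index set of size at most $k$, so that $\gamma$ may legitimately be applied to them. I do not expect any genuine obstacle beyond this bookkeeping.
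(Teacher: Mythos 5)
Your proposal is correct, and the substantive half of it takes a genuinely different (and slightly sharper) route than the paper. Both arguments start from the same two ingredients: the identification of the middle quantity with the quadratic form of the principal submatrices $\mathbf{I}_{|S|}-A_S^TA_S$, and the duality $\beta_{k,p,q}=\sup_{|S|\le k}\sup_{x,y\neq 0}|y^T(\mathbf{I}_{|S|}-A_S^TA_S)x|/(\|x\|_p\|y\|_p)$; your H\"older step for the upper bound is just the paper's ``take $y=x$'' in this dual formulation. For the lower bound, the paper rewrites both $y^Tx$ and $y^TA_S^TA_Sx$ via the identity $2y^Tx=\|x\|_2^2+\|y\|_2^2-\|x-y\|_2^2$, which produces three quadratic-form terms (in $x$, $y$, and $x-y$) and hence the constant $3$, i.e.\ $\gamma\ge\frac{1}{3}\beta_{k,p,q}$; a side benefit is that it exhibits the middle quantity explicitly as $\sup|\,\|z\|_2^2-\|Az\|_2^2\,|/\|z\|_p^2$, which ties in with the RIP interpretation used later. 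You instead apply the symmetric polarization identity $4\,y^TBx=(x+y)^TB(x+y)-(x-y)^TB(x-y)$ directly to $B=\mathbf{I}_{|S|}-A_S^TA_S$, which involves only the two vectors $x\pm y$ of $\ell_p$-norm at most $2$, supported on $S$, and yields $\beta_{k,p,q}\le 2\gamma$, i.e.\ the stronger $\gamma\ge\frac{1}{2}\beta_{k,p,q}$, which of course implies the stated inequality. Your bookkeeping is sound: $B$ is indeed symmetric, the dual-norm supremum is attained in finite dimension (including $q=\infty$), and the degenerate cases $x\pm y=0$ cause no harm. So your argument is both valid and marginally tighter; the paper's decomposition trades that tightness for a formulation in terms of $\|z\|_2^2-\|Az\|_2^2$.
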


The proof of this proposition is presented in Appendix A.
It can be noted that the gRIP closely relates to the coherence property and restricted isometry property (RIP), whose definitions are listed in the following.
\begin{definition}
For any matrix $A\in \mathbf{R}^{m\times n}$,
the coherence of $A$ is defined as
\begin{equation}
\mu = \max_{i\neq j} \dfrac{\left|\langle A_i, A_j \rangle\right|}{\|A_i\|_2\cdot\|A_j\|_2},
\label{Def_mu}
\end{equation}
where $A_i$ denotes the $i$-th column of $A$ for $i=1,\ldots, n.$
\label{def_Coherence}
\end{definition}

\begin{definition}
For any matrix $A\in \mathbf{R}^{m\times n}$,
given $1\leq k\leq n,$ the restricted isometry constant (RIC) of $A$ with respect to $k$, $\delta_k$, is defined to be the smallest constant $\delta$ such that
\begin{equation}
(1-\delta) \|z\|_2^2 \leq \|Az\|_2^2 \leq (1+\delta) \|z\|_2^2,
\label{Def_RIP}
\end{equation}
for all $k$-sparse vector, i.e., $\|z\|_0 \leq k.$
\label{def_RIP}
\end{definition}

By Definition {\ref{def_RIP}}, RIC can also be written as:
\begin{equation}
\label{RIPEquivelent}
\delta_k = \sup_{z\in\mathbf{R}^n \setminus \{0\}, \|z\|_0\leq k}\frac{\left|z^T(A^TA-\mathbf{I}_n)z\right|}{\|z\|_2^2},
\end{equation}
which is very similar to the middle part of (\ref{prop_gRIP_Exp}).
In fact, Proposition \ref{Prop_gRIP_Coherence_RIP} shows that coherence and RIP are two special cases of gRIP.
\begin{prop}
For any column-normalized matrix $A\in \mathbf{R}^{m\times n}$, that is, $\|A_j\|_2 =1 $ for $j=1,\ldots, n$,
it holds
\begin{enumerate}
\item[(i)]
$\beta_{k,1,\infty} = \mu,$ for $2\leq k \leq n.$

\item[(ii)]
$\beta_{k,2,2} = \delta_{k} ,$ for $1 \leq k \leq n.$
\end{enumerate}
\label{Prop_gRIP_Coherence_RIP}
\end{prop}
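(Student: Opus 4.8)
The plan is to verify the two identities separately by unwinding the definitions and comparing them with the supremum characterization furnished by Proposition \ref{prop_gRIP1}, or more directly with the dual-norm reformulation \eqref{def2} of $\beta_{k,p,q}$. For part (ii), observe that with $p=q=2$ the equality \eqref{RIPEquivelent} already expresses $\delta_k$ as $\sup_{\|z\|_0\le k}\frac{|z^T(A^TA-\mathbf I_n)z|}{\|z\|_2^2}$, which is exactly the quantity sandwiched in \eqref{prop_gRIP_Exp}; what remains is to show that for $p=q=2$ the two outer inequalities in \eqref{prop_gRIP_Exp} collapse to equalities. This is immediate: for symmetric matrices, the operator $2$-norm equals the spectral radius, so $\|(\mathbf I_{|S|}-A_S^TA_S)x\|_2/\|x\|_2$ has supremum over $x$ equal to $\sup_x |x^T(\mathbf I_{|S|}-A_S^TA_S)x|/\|x\|_2^2$; taking the supremum over $|S|\le k$ and rewriting via the zero-padding correspondence $x\in\mathbf R^{|S|}\leftrightarrow z\in\mathbf R^n$ with $\mathrm{supp}(z)\subseteq S$ gives $\beta_{k,2,2}=\delta_k$ directly, without needing the factor-$3$ slack.

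For part (i), I would start from the reformulation \eqref{def3}, specialized to $p=1$ (so $q=\infty$):
\begin{equation}
\beta_{k,1,\infty}=\sup_{|S|\le k}\ \sup_{\|x\|_1\le 1,\ \|y\|_1\le 1}\bigl|y^T(\mathbf I_{|S|}-A_S^TA_S)x\bigr|.
\label{eq:star}
\end{equation}
Because $\|A_j\|_2=1$, the matrix $\mathbf I_{|S|}-A_S^TA_S$ has zero diagonal and off-diagonal entries $-\langle A_i,A_j\rangle$ of absolute value at most $\mu$. Since a bilinear form $y^TMx$ with $\|x\|_1,\|y\|_1\le 1$ is maximized at vertices of the cross-polytope, i.e. at signed standard basis vectors $x=\pm e_i$, $y=\pm e_j$, the inner supremum in \eqref{eq:star} equals $\max_{i,j\in S}|(\mathbf I_{|S|}-A_S^TA_S)_{ij}| = \max_{i\ne j,\ i,j\in S}|\langle A_i,A_j\rangle|$ (the $i=j$ term vanishes). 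Taking the supremum over $|S|\le k$ with $k\ge 2$ then yields exactly $\max_{i\ne j}|\langle A_i,A_j\rangle|=\mu$.

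The only genuinely delicate point is the claim that the bilinear-form supremum over the $\ell_1$ balls is attained at basis vectors and therefore equals the largest-magnitude entry; I would justify it by noting that $\{x:\|x\|_1\le 1\}$ is the convex hull of $\{\pm e_i\}$, that $x\mapsto |y^TMx|$ is convex, hence its max over this polytope is attained at a vertex, and symmetrically for $y$. A small bookkeeping subtlety is the requirement $k\ge 2$ in (i): one needs $|S|\ge 2$ for an off-diagonal entry to exist, so that the supremum over admissible $S$ actually ranges over a pair achieving the coherence; for $k=1$ the constant $\beta_{1,1,\infty}$ would be $0$. With these observations both identities follow by direct computation, and I expect no further obstacles.
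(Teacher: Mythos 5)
Your proposal is correct. For part (i) it is in substance the same argument as the paper's: your upper bound, obtained by maximizing the bilinear form $y^T(\mathbf{I}_{|S|}-A_S^TA_S)x$ over the $\ell_1$ balls and noting that the maximum of a convex function over the cross-polytope sits at a vertex, reduces to the paper's entrywise estimate $\|(\mathbf{I}_{|S|}-A_S^TA_S)x\|_\infty\leq\mu\|x\|_1$ (zero diagonal, off-diagonal entries bounded by $\mu$), and your lower bound, taking $S$ to contain the coherence-achieving pair and evaluating at the corresponding basis vectors, is exactly the paper's choice $S=\{i_0,j_0\}$, $\mathbf{e}=(0,1)^T$; your remark on why $k\geq2$ is needed is the same bookkeeping point. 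For part (ii) you actually go beyond the paper, which gives no argument and simply refers to Definition 1 of \cite{FoucartRIP2010}: your observation that $\beta_{k,2,2}=\sup_{|S|\leq k}\|\mathbf{I}_{|S|}-A_S^TA_S\|_2$, that this symmetric matrix has spectral norm equal to its spectral radius, i.e.\ to $\sup_x|x^T(\mathbf{I}_{|S|}-A_S^TA_S)x|/\|x\|_2^2$, and that zero-padding identifies the latter supremum with the variational formula \eqref{RIPEquivelent} for $\delta_k$, is a correct and self-contained proof, and it rightly bypasses the factor-$3$ slack of \eqref{prop_gRIP_Exp}, which only sandwiches $\beta_{k,p,q}$ for general $(p,q)$. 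No gaps.
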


The proof of this proposition is shown in Appendix B.

\subsection{Uniqueness Theorem Characterized via gRIP}

We first give a lemma to show the relation between two different norms for a $k$-sparse vector space.
\begin{lemma}
For any vector $x\in \mathbf{R}^n$ with $\|x\|_0=k\leq n$, and for any $1 \leq q \leq p \leq \infty$,
then
\begin{equation}
\|x\|_p \leq \|x\|_q \leq k^{\frac{1}{q}-\frac{1}{p}} \|x\|_p.
\label{Norm_Equiv_Thm}
\end{equation}
\label{Norm_Equiv_Lemma}
\end{lemma}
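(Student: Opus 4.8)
The plan is to reduce the statement to the support of $x$ and then invoke two classical facts about $\ell_p$ norms: their monotonicity in $p$ and H\"older's inequality. Write $S=\{i:x_i\neq 0\}$, so $|S|=k$ (the argument goes through verbatim when $|S|\le k$, since $k\mapsto k^{1/q-1/p}$ is nondecreasing); because every $\ell_p$ norm of $x$ depends only on the subvector $x_S$, it suffices to work with a $k$-dimensional vector all of whose entries are nonzero. The cases $q=p$ (both inequalities become equalities, as $k^{1/q-1/p}=k^0=1$) and $x=0$ are trivial, so I will assume $1\le q<p\le\infty$ and $x\neq 0$.

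For the left inequality $\|x\|_p\le\|x\|_q$, I would use homogeneity to normalize $\|x\|_q=1$; then $|x_i|\le 1$ for every $i$, so $|x_i|^p\le|x_i|^q$ when $p<\infty$ (the map $r\mapsto t^r$ is nondecreasing for fixed $t\in[0,1]$). Summing over $i\in S$ gives $\|x\|_p^p\le\|x\|_q^q=1$, hence $\|x\|_p\le 1=\|x\|_q$. For $p=\infty$ the same normalization gives $\|x\|_\infty=\max_i|x_i|\le 1$ directly.

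For the right inequality, when $p<\infty$ I would apply H\"older's inequality to $\sum_{i\in S}|x_i|^q\cdot 1$ with the conjugate exponents $p/q$ and $p/(p-q)$ (conjugate since $q/p+(p-q)/p=1$):
$$\|x\|_q^q=\sum_{i\in S}|x_i|^q\le\Big(\sum_{i\in S}|x_i|^p\Big)^{q/p}\Big(\sum_{i\in S}1\Big)^{(p-q)/p}=\|x\|_p^q\,k^{1-q/p}.$$
Taking $q$-th roots yields $\|x\|_q\le k^{1/q-1/p}\|x\|_p$. For $p=\infty$, the cruder estimate $\|x\|_q^q=\sum_{i\in S}|x_i|^q\le k\,\|x\|_\infty^q$ gives $\|x\|_q\le k^{1/q}\|x\|_\infty$, which is exactly the claimed bound with $1/p=0$.

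There is essentially no hard step here; the only points requiring a little care are isolating the $p=\infty$ endpoint (handled separately above, since H\"older is stated for finite exponents) and checking that the exponents chosen in H\"older are genuinely conjugate. Combining the two displayed chains completes the proof of \eqref{Norm_Equiv_Thm}.
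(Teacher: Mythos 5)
Your proof is correct. The paper itself gives no argument for this lemma---it simply remarks that it is ``trivial based on the well-known norm equivalence theorem'' and omits the proof---and what you have written is precisely the standard argument behind that theorem: restrict to the support $S$ with $|S|\leq k$, get $\|x\|_p\leq\|x\|_q$ by normalizing $\|x\|_q=1$ and using $|x_i|^p\leq|x_i|^q$ on $[0,1]$, and get $\|x\|_q\leq k^{\frac{1}{q}-\frac{1}{p}}\|x\|_p$ by H\"older with conjugate exponents $p/q$ and $p/(p-q)$, with the $p=\infty$ endpoint treated separately by the crude bound $\|x\|_q^q\leq k\|x\|_\infty^q$. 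Your handling of the edge cases ($q=p$, $x=0$, $|S|\leq k$) is careful and complete, so your write-up in fact supplies the details the paper chose to leave out.
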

This lemma is trivial based on the well-known norm equivalence theorem so the proof is omitted.
Note that Lemma \ref{Norm_Equiv_Lemma} is equivalent to
\begin{equation}
\|x\|_p \leq k^{\max\{\frac{1}{p}-\frac{1}{q},0\}} \|x\|_q, \forall  p, q \in [1,\infty].
\label{Norm_Equiv_Extension}
\end{equation}
With Lemma \ref{Norm_Equiv_Lemma}, the following theorem shows that a $k$-sparse solution of the equation $Ax=b$
will be the unique sparsest solution
if $A$ satisfies a certain gRIP condition.
\begin{thm}
Let $x^*$ be a $k$-sparse solution of $Ax =b$.
If $A$ satisfies $(2k,p,q)$-gRIP with
\[
0<\beta_{2k,p,q}< (2k)^{\min\{\frac{1}{q}-\frac{1}{p},0\}},
\]
then $x^*$ is the unique sparsest solution.
\label{Unique_Thm}
\end{thm}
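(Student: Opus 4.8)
The plan is to run the classical null-space uniqueness argument, adapted to the mixed $(\ell_p,\ell_q)$ geometry encoded by the gRIC. I would argue by contradiction: suppose $x^*$ is \emph{not} the unique sparsest solution, so there exists $y\in\mathbf{R}^n$ with $Ay=b$, $y\neq x^*$ and $\|y\|_0\leq\|x^*\|_0\leq k$. Setting $d=x^*-y$, we have $d\neq 0$, $Ad=0$, and, since the support of $d$ is contained in the union of the supports of $x^*$ and $y$, $\|d\|_0\leq\|x^*\|_0+\|y\|_0\leq 2k$. Write $S$ for the support of $d$, so $|S|\leq 2k$ and $d_S\in\mathbf{R}^{|S|}\setminus\{0\}$.

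The crux is the observation that $Ad=0$ forces $A_Sd_S=0$, hence $A_S^TA_Sd_S=0$, and therefore
\[
(\mathbf{I}_{|S|}-A_S^TA_S)d_S=d_S .
\]
Feeding $d_S$ into Definition \ref{def_gRIP} (legitimate because $|S|\leq 2k$) then gives $\|d_S\|_q\leq\beta_{2k,p,q}\|d_S\|_p$. Next I would invoke the norm-equivalence estimate (\ref{Norm_Equiv_Extension}) of Lemma \ref{Norm_Equiv_Lemma}, applied in the space $\mathbf{R}^{|S|}$ of dimension at most $2k$, to get $\|d_S\|_p\leq (2k)^{\max\{\frac{1}{p}-\frac{1}{q},0\}}\|d_S\|_q$. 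Chaining the two inequalities and cancelling $\|d_S\|_q>0$ produces
\[
1\leq\beta_{2k,p,q}\,(2k)^{\max\{\frac{1}{p}-\frac{1}{q},0\}},
\]
that is, $\beta_{2k,p,q}\geq(2k)^{\min\{\frac{1}{q}-\frac{1}{p},0\}}$, which contradicts the hypothesis. Hence no such $y$ exists and $x^*$ is the unique sparsest solution.

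I do not expect a genuinely hard step; the argument is short. The points that need care are purely bookkeeping: (i) using the norm-equivalence inequality in the direction that yields a \emph{lower} bound on $\beta_{2k,p,q}$ (the exponent changes sign, which is exactly why the threshold in the statement involves $\min\{\frac1q-\frac1p,0\}$ rather than $\max$); (ii) confirming that both Definition \ref{def_gRIP} and Lemma \ref{Norm_Equiv_Lemma} are legitimately applied with parameter $2k$, which holds since $|S|\leq\|x^*\|_0+\|y\|_0\leq 2k$; and (iii) noting that $\frac1p+\frac1q=1$ forces $p,q\in[1,\infty]$, so $\|\cdot\|_p$ and $\|\cdot\|_q$ are bona fide norms for which norm equivalence and the duality used in Definition \ref{def_gRIP} apply. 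The stated requirement $\beta_{2k,p,q}>0$ is not actually needed for this contradiction (if $\beta_{2k,p,q}=0$ then $A_S^TA_S=\mathbf{I}_{|S|}$ for every admissible $S$, and uniqueness is immediate), but keeping it is harmless, and one checks that in the special cases $(p,q)=(2,2)$ and $(p,q)=(1,\infty)$ the condition reduces to $\delta_{2k}<1$ and $\mu<\tfrac{1}{2k}$, which matches the known uniqueness thresholds.
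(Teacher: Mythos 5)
Your proposal is correct and follows essentially the same route as the paper: a null-space contradiction using the identity $(\mathbf{I}_{|S|}-A_S^TA_S)d_S=d_S$ on the support $S$ of the difference (with $|S|\leq 2k$), the gRIC bound $\|d_S\|_q\leq\beta_{2k,p,q}\|d_S\|_p$, and Lemma \ref{Norm_Equiv_Lemma} to pass back from $\|\cdot\|_p$ to $\|\cdot\|_q$, yielding the same contradiction with the assumed bound on $\beta_{2k,p,q}$. The only differences are cosmetic (you phrase the conclusion as a lower bound on $\beta_{2k,p,q}$ and add side remarks on the $\beta>0$ hypothesis and the special cases), so no further changes are needed.
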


The proof of Theorem {\ref{Unique_Thm}} is given in Appendix C.
According to Proposition {\ref{Prop_gRIP_Coherence_RIP}} and Theorem {\ref{Unique_Thm}}, we can obtain the following uniqueness results characterized via coherence and RIP, respectively.

\begin{coro}
\label{Uniq_Co}
Let $x^*$ be a $k$-sparse solution of the equation $Ax =b$.
If $\mu$ satisfies
\[
0<\mu < \frac{1}{2k},
\]
then $x^*$ is the unique sparsest solution.
\end{coro}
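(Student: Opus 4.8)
The plan is to obtain this corollary as a direct specialization of Theorem~\ref{Unique_Thm} to the dual pair $(p,q)=(1,\infty)$, using the identification of the coherence with a generalized restricted isometry constant provided by Proposition~\ref{Prop_gRIP_Coherence_RIP}(i).

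First I would note that $\tfrac1p+\tfrac1q=1$ holds for $(p,q)=(1,\infty)$, so Theorem~\ref{Unique_Thm} is applicable with this choice. The exponent appearing in the hypothesis of Theorem~\ref{Unique_Thm} becomes
\[
\min\Big\{\tfrac1q-\tfrac1p,\,0\Big\}=\min\{0-1,\,0\}=-1,
\]
so that the threshold $(2k)^{\min\{\frac1q-\frac1p,0\}}$ equals $(2k)^{-1}=\tfrac{1}{2k}$. Next, since $x^*$ is $k$-sparse we may assume $k\geq 1$ (the case $x^*=0$ being trivial), hence $2k\geq 2$, and the matrix $A$ is column-normalized as required throughout the paper; Proposition~\ref{Prop_gRIP_Coherence_RIP}(i) then gives $\beta_{2k,1,\infty}=\mu$. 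Substituting this into the hypothesis of Theorem~\ref{Unique_Thm} shows that the condition $0<\mu<\tfrac{1}{2k}$ is precisely $0<\beta_{2k,1,\infty}<(2k)^{\min\{\frac1q-\frac1p,0\}}$ for $(p,q)=(1,\infty)$, so Theorem~\ref{Unique_Thm} applies and yields that $x^*$ is the unique sparsest solution of $Ax=b$.

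There is essentially no genuine obstacle here, since the statement is a corollary; the only points requiring care are verifying that the column-normalization assumption of Proposition~\ref{Prop_gRIP_Coherence_RIP}(i) is in force (it is, by the normalization step in Algorithm~1 and the standing hypotheses) and handling the degenerate case $k=0$ separately so that the index $2k$ falls in the admissible range $2\leq 2k\leq n$ needed to invoke $\beta_{2k,1,\infty}=\mu$.
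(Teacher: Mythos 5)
Your proposal is correct and is exactly the route the paper intends: the corollary is obtained by specializing Theorem~\ref{Unique_Thm} to $(p,q)=(1,\infty)$, where the threshold $(2k)^{\min\{\frac1q-\frac1p,0\}}$ becomes $\frac{1}{2k}$, and invoking Proposition~\ref{Prop_gRIP_Coherence_RIP}(i) to identify $\beta_{2k,1,\infty}=\mu$. Your extra remarks about column normalization and the trivial case $k=0$ are fine but not substantive deviations.
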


It was shown in \cite{CaiCoherence} that when $\mu<\frac{1}{2k-1}$, the $k$-sparse solution should be unique.
In another perspective, it can be noted that the condition $\mu < \frac{1}{2k}$ is equivalent to $k<\frac{1}{2\mu}$
while $\mu<\frac{1}{2k-1}$ is equivalent to $k<\frac{1}{2\mu}+\frac{1}{2}.$
Since $k$ should be an integer, these two conditions are almost the same.

\begin{coro}
\label{Uniq_RIP}
Let $x^*$ be a $k$-sparse solution of the equation $Ax =b$.
If $\delta_{2k}$ satisfies
\[
0<\delta_{2k}<1,
\]
then $x^*$ is the unique sparsest solution.
\end{coro}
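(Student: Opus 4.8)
The plan is to obtain this corollary directly from Theorem~\ref{Unique_Thm} by specializing the dual exponent pair to $(p,q)=(2,2)$ and then translating the resulting gRIP condition into a statement about the ordinary restricted isometry constant. First I would verify that $(p,q)=(2,2)$ is admissible for Theorem~\ref{Unique_Thm}: we have $p=2\in[1,\infty)$, $q=2\in[1,\infty]$, and $\tfrac{1}{p}+\tfrac{1}{q}=1$, so the theorem may legitimately be invoked with this choice, with $x^*$ the given $k$-sparse solution of $Ax=b$.

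Next I would evaluate the threshold appearing in Theorem~\ref{Unique_Thm}. Since $\tfrac{1}{q}-\tfrac{1}{p}=0$ when $p=q=2$, we get $\min\{\tfrac{1}{q}-\tfrac{1}{p},0\}=0$, hence
\[
(2k)^{\min\{\frac{1}{q}-\frac{1}{p},0\}}=(2k)^{0}=1 .
\]
Thus Theorem~\ref{Unique_Thm} with $(p,q)=(2,2)$ asserts that $x^*$ is the unique sparsest solution provided $0<\beta_{2k,2,2}<1$. It then remains only to identify $\beta_{2k,2,2}$ with $\delta_{2k}$: by Proposition~\ref{Prop_gRIP_Coherence_RIP}(ii), applied with $2k$ in place of $k$ (and using that $A$ is column-normalized, which is the standing assumption on $A$ throughout the paper, cf.\ the initialization of Algorithm~1), we have $\beta_{2k,2,2}=\delta_{2k}$. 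Substituting this identity, the hypothesis $0<\delta_{2k}<1$ of the corollary is seen to coincide exactly with the hypothesis of Theorem~\ref{Unique_Thm} in this case, and the conclusion follows at once.

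Since every step is either a specialization of or a direct appeal to an already-established result, I do not expect a genuine obstacle; the only point demanding a moment of care is the bookkeeping with the exponent $\min\{\tfrac{1}{q}-\tfrac{1}{p},0\}$, which must be observed to vanish precisely because $p=q=2$, so that the admissible range for the gRIC collapses to the full interval $(0,1)$. For completeness one may also note that the corollary recovers the classical observation that $\delta_{2k}<1$ forces $A$ to be injective on the set of $k$-sparse vectors: if two distinct $k$-sparse vectors both solved $Ax=b$, their difference would be a nonzero $2k$-sparse element of $\ker A$, contradicting the lower bound $\|Az\|_2^2\geq(1-\delta_{2k})\|z\|_2^2>0$ in Definition~\ref{def_RIP}; but this argument is subsumed by the route through Theorem~\ref{Unique_Thm}.
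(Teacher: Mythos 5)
Your proposal is correct and matches the paper's intended argument: the corollary is obtained by taking $(p,q)=(2,2)$ in Theorem~\ref{Unique_Thm}, noting that the exponent $\min\{\frac{1}{q}-\frac{1}{p},0\}$ vanishes so the condition reads $0<\beta_{2k,2,2}<1$, and identifying $\beta_{2k,2,2}=\delta_{2k}$ via Proposition~\ref{Prop_gRIP_Coherence_RIP}(ii). The elementary kernel argument you add at the end is a nice sanity check but, as you say, is subsumed by the main route.
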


According to {\cite{Candes08RIP}},
the RIP condition obtained in Corollary {\ref{Uniq_RIP}}
is the same as the state-of-the-art result and more importantly,
is tight in the sense that once the condition is violated, then we can construct two different signals with the same sparsity.

\section{Convergence Analysis}

In this section, we will study the convergence of the AIT algorithm based on the introduced gRIP.

\subsection{Characterization via gRIP}

To describe the convergence of the AIT algorithm, we first define
$$
L_1 = 2^{p-1}(k^*)^{\max\{1-\frac{p}{q},0\}}+(2^{p-1}-(c_2)^p+1)k^*,
$$
$$
L_2 = 2^{p}(2k^*)^{\max\{1-\frac{p}{q},0\}} + 2^{p-1}(c_1)^pk^*,
$$
and
$$L = \min\{\sqrt[p]{L_1},\sqrt[p]{L_2}\},$$
where $p\in [1,\infty), q\in [1,\infty]$ and $c_1, c_2$ are the corresponding boundedness parameters.

\begin{thm}
\label{conv_gRIP}
Let $\{x^{(t)}\}$ be a sequence generated by the AIT algorithm.
Assume that $A$ satisfies $(3k^*+1,p,q)$-gRIP with the constant $\beta_{3k^*+1,p,q}<\frac{1}{L}$,
and let
\begin{enumerate}
\item[(i)]
$k=k^*;$

\item[(ii)]
$\underline{s} < s < \overline{s}$, where
$$
\underline{s} = \dfrac{(2k^*)^{\max\{\frac{1}{q}-\frac{1}{p},0\}}-\frac{1}{L}}{(2k^*)^{\max\{\frac{1}{q}-\frac{1}{p},0\}}-\beta_{3k^*+1,p,q}},
$$
and
$$
\overline{s} = \dfrac{(2k^*)^{\max\{\frac{1}{q}-\frac{1}{p},0\}}+\frac{1}{L}}{(2k^*)^{\max\{\frac{1}{q}-\frac{1}{p},0\}}+\beta_{3k^*+1,p,q}}.
$$
\end{enumerate}
Then
$$
\|x^{(t)}-x^*\|_p \leq (\rho_s)^t \|x^*-x^{(0)}\|_p + \frac{sL}{1-\rho_s}\|A^T \epsilon\|_q,
$$
where $\rho_s = \gamma_sL<1$ with
$$\gamma_s = |1-s|(2k^*)^{\max \{\frac{1}{q}-\frac{1}{p},0\}}+s\beta_{3k^*+1,p,q}.$$
Particularly, when $\epsilon=0$,
it holds
$$
\|x^{(t)}-x^*\|_p \leq (\rho_s)^t \|x^*-x^{(0)}\|_p.
$$
\end{thm}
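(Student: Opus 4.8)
The plan is to follow the error vector $e^{(t)} := x^{(t)} - x^{*}$ and show that one AIT sweep contracts $\|e^{(t)}\|_{p}$ by the factor $\rho_{s}$ up to an additive noise term $sL\|A^{T}\epsilon\|_{q}$; the stated estimate then follows by unrolling the recursion and summing a geometric series. First I would record the effect of the gradient step: since $b = Ax^{*} + \epsilon$,
\[
z^{(t+1)} - x^{*} = \bigl((1-s)\mathbf{I}_{n} + s(\mathbf{I}_{n} - A^{T}A)\bigr)e^{(t)} + sA^{T}\epsilon .
\]
Because $k = k^{*}$, the iterate $x^{(t)}$ is supported on $I^{t}$ and $x^{*}$ on $I^{*}$, so $e^{(t)}$ is supported on $W_{t} := I^{t}\cup I^{*}$ with $|W_{t}|\le 2k^{*}$; this is why the parameter $3k^{*}+1$ will be exactly what is needed.

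The core of the argument is a one-step bound of the form
\[
\|x^{(t+1)} - x^{*}\|_{p} \;\le\; L\,\bigl\|(z^{(t+1)}-x^{*})_{R_{t+1}}\bigr\|_{q}, \qquad R_{t+1} := I^{*}\cup I^{t+1}_{+},
\]
where $I^{t+1}_{+}$ is the index set of the $k^{*}+1$ largest entries of $z^{(t+1)}$ in magnitude, so $|R_{t+1}| \le 2k^{*}+1$. To prove it I would partition the support $I^{t+1}\cup I^{*}$ of $x^{(t+1)}-x^{*}$ into $I^{t+1}\cap I^{*}$, $I^{t+1}\setminus I^{*}$ and $I^{*}\setminus I^{t+1}$, and estimate each block using the thresholding rule and the boundedness clause of Assumption~1: on $I^{t+1}\cap I^{*}$ one has $|x^{(t+1)}_{i}-x^{*}_{i}|\le |z^{(t+1)}_{i}-x^{*}_{i}| + c_{1}\tau^{(t+1)}$; on $I^{t+1}\setminus I^{*}$ (there $x^{*}_{i}=0$ and $|z^{(t+1)}_{i}|\ge\tau^{(t+1)}$) one has $|x^{(t+1)}_{i}| \le |z^{(t+1)}_{i}| - c_{2}\tau^{(t+1)}$; and on $I^{*}\setminus I^{t+1}$ (there $x^{(t+1)}_{i}=0$ and $|z^{(t+1)}_{i}|\le\tau^{(t+1)}$) one has $|x^{*}_{i}|\le |z^{(t+1)}_{i}-x^{*}_{i}| + \tau^{(t+1)}$, and moreover $I^{*}\setminus I^{t+1}$ and $I^{t+1}\setminus I^{*}$ have equal cardinality and can be matched bijectively so that each threshold $\tau^{(t+1)}$ on the former is dominated by a genuine residual entry on the latter. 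Raising to the $p$-th power, summing, and then invoking Lemma~\ref{lp_norm} for the sums of $p$-th powers, Lemma~\ref{tau} to bound $\tau^{(t+1)} \le \|(z^{(t+1)}-x^{*})_{I^{t+1}_{+}}\|_{q}$, and Lemma~\ref{Norm_Equiv_Lemma} to pass from $\ell_{p}$-norms of $k^{*}$- or $2k^{*}$-sparse vectors to $\ell_{q}$-norms (this is precisely where the exponents $\max\{1-\tfrac{p}{q},0\}$ appear), yields $\|x^{(t+1)}-x^{*}\|_{p}^{p} \le L_{1}\,\|(z^{(t+1)}-x^{*})_{R_{t+1}}\|_{q}^{p}$; organizing the same block estimates in the alternative cruder way (matching the two blocks directly and collapsing all remaining threshold contributions into a single $\tau^{p}$) gives the same inequality with $L_{2}$ in place of $L_{1}$, so taking $L=\min\{\sqrt[p]{L_{1}},\sqrt[p]{L_{2}}\}$ gives the displayed bound. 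I expect this step to be the main obstacle: the constants must be sharp enough that $\beta_{3k^{*}+1,p,q}<1/L$ remains a usable hypothesis, which forces careful bookkeeping of the threshold terms and genuine use of both $c_{1}$ and $c_{2}$.

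The second ingredient is comparatively routine. Restricting the identity for $z^{(t+1)}-x^{*}$ to $R_{t+1}$ and using the triangle inequality,
\[
\bigl\|(z^{(t+1)}-x^{*})_{R_{t+1}}\bigr\|_{q} \le |1-s|\,\|(e^{(t)})_{R_{t+1}}\|_{q} + s\,\bigl\|\bigl((\mathbf{I}_{n}-A^{T}A)e^{(t)}\bigr)_{R_{t+1}}\bigr\|_{q} + s\,\|A^{T}\epsilon\|_{q} .
\]
Since $e^{(t)}$ is supported on $W_{t}$ and $R_{t+1}\cup W_{t} = I^{t+1}_{+}\cup I^{*}\cup I^{t}$ has at most $(k^{*}+1)+k^{*}+k^{*}=3k^{*}+1$ elements, passing to the submatrix indexed by $R_{t+1}\cup W_{t}$ (restriction to a subset only decreases the $\ell_{q}$-norm) and applying Definition~\ref{def_gRIP} bounds the middle term by $\beta_{3k^{*}+1,p,q}\|e^{(t)}\|_{p}$, while Lemma~\ref{Norm_Equiv_Lemma} bounds the first term by $(2k^{*})^{\max\{\frac{1}{q}-\frac{1}{p},0\}}\|e^{(t)}\|_{p}$. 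Hence $\|(z^{(t+1)}-x^{*})_{R_{t+1}}\|_{q} \le \gamma_{s}\|e^{(t)}\|_{p} + s\|A^{T}\epsilon\|_{q}$, and combining with the core bound,
\[
\|x^{(t+1)} - x^{*}\|_{p} \le \gamma_{s}L\,\|x^{(t)}-x^{*}\|_{p} + sL\,\|A^{T}\epsilon\|_{q} = \rho_{s}\,\|x^{(t)}-x^{*}\|_{p} + sL\,\|A^{T}\epsilon\|_{q}.
\]

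Finally I would verify $\rho_{s}=\gamma_{s}L<1$ and close the induction. Write $M=(2k^{*})^{\max\{\frac{1}{q}-\frac{1}{p},0\}}$ and $\beta=\beta_{3k^{*}+1,p,q}$, so $\gamma_{s}=(1-s)M+s\beta=M-s(M-\beta)$ for $s\le1$ and $\gamma_{s}=(s-1)M+s\beta$ for $s\ge1$; the definition of $L$ gives $L\ge1$, hence $1/L\le1\le M$ and $M-\beta>0$, and a direct computation shows that $\gamma_{s}<1/L$ is equivalent to $s>\underline{s}$ when $s\le1$ and to $s<\overline{s}$ when $s\ge1$, while $\beta<1/L$ also yields $\underline{s}<1<\overline{s}$; thus $\rho_{s}<1$ for every $s\in(\underline{s},\overline{s})$. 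Iterating the one-step inequality and summing $\sum_{j\ge0}\rho_{s}^{j}=1/(1-\rho_{s})$ gives
\[
\|x^{(t)}-x^{*}\|_{p} \le \rho_{s}^{t}\,\|x^{*}-x^{(0)}\|_{p} + \frac{sL}{1-\rho_{s}}\,\|A^{T}\epsilon\|_{q},
\]
and the case $\epsilon=0$ is immediate because the additive term then vanishes.
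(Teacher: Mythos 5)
Your proposal is correct and follows essentially the same route as the paper: the same one-step contraction split into (a) a gRIP-plus-norm-equivalence bound giving $\gamma_s\|x^{(t)}-x^*\|_p + s\|A^T\epsilon\|_q$ on the restricted residual (your set $R_{t+1}\cup W_t$ is exactly the paper's $S^t=I^{t+1}_+\cup I^t\cup I^*$ of size at most $3k^*+1$), and (b) the thresholding estimate $\|x^{(t+1)}-x^*\|_p\le L\|(z^{(t+1)}-x^*)_{S^t}\|_q$ obtained from Lemmas \ref{lp_norm}--\ref{tau} and \ref{Norm_Equiv_Lemma}, followed by the same verification that $\gamma_s<1/L$ on $(\underline{s},\overline{s})$ and the same geometric-series induction. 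Your three-block partition of $I^{t+1}\cup I^*$ with the cardinality-matching trick, organized once sharply (via $c_2$) and once crudely (via $c_1\tau$), is just a repackaging of the paper's Steps a) and b) and reproduces the constants $L_1$, $L_2$ and $L=\min\{\sqrt[p]{L_1},\sqrt[p]{L_2}\}$.
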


The proof of this Theorem is presented in Appendix D.
Under the conditions of this theorem, we can verify that $0<\rho_s <1.$
We first note that $\beta_{3k^*+1,p,q}<\frac{1}{L} < 1 \leq (2k^*)^{\max\{\frac{1}{q}-\frac{1}{p},0\}},$
then it holds $\underline{s}< 1< \overline{s}$. The definition of $\gamma_s$ gives
$\gamma_s=$
\begin{eqnarray*}
\left\{
\begin{array}{l}
(1-s)(2k^*)^{\max\{\frac{1}{q}-\frac{1}{p},0\}}+s\beta_{3k^*+1,p,q}, \ if \ \underline{s}< s\leq 1 \\
(s-1)(2k^*)^{\max\{\frac{1}{q}-\frac{1}{p},0\}}+s\beta_{3k^*+1,p,q}, \ if \ 1< s < \overline{s}
\end{array}
\right..
\end{eqnarray*}
If $\underline{s}< s\leq 1$, it holds
$$
\gamma_s < (1-\underline{s})(2k^*)^{\max\{\frac{1}{q}-\frac{1}{p},0\}}+\underline{s}\beta_{3k^*+1,p,q} = \frac{1}{L}.
$$
Similarly, if $1< s\leq \overline{s}$
$$
\gamma_s < (\overline{s}-1)(2k^*)^{\max\{\frac{1}{q}-\frac{1}{p},0\}}+\overline{s} \beta_{3k^*+1,p,q} = \frac{1}{L}.
$$
Therefore, we have $\gamma_s<\frac{1}{L}$ and thus,
$\rho_s = \gamma_s L <1.$

Theorem {\ref{conv_gRIP}} demonstrates that in the noise free case,
the AIT algorithm converges to the original sparse signal at a linear rate, while in the noisy case,
it also converges at a linear rate until reaching an error bound.
Moreover, it can be noted that the constant $\rho_s$ depends on the step size $s$.
Since $\beta_{3k^*+1,p,q}<\frac{1}{L} < (2k^*)^{\max\{\frac{1}{q}-\frac{1}{p},0\}},$ $\rho_s$ reaches its minimum at $s=1$.
The trend of $\rho_s$ with respect to $s$ is shown in Fig. \ref{DiagramRho}.
The optimal convergence rate is obtained when $s=1$.
This observation is consistent with the conclusion drawn in \cite{kyrillidis2011recipes}.

\begin{figure}[!ht]
  \centering
  \includegraphics[width=0.2\textwidth]{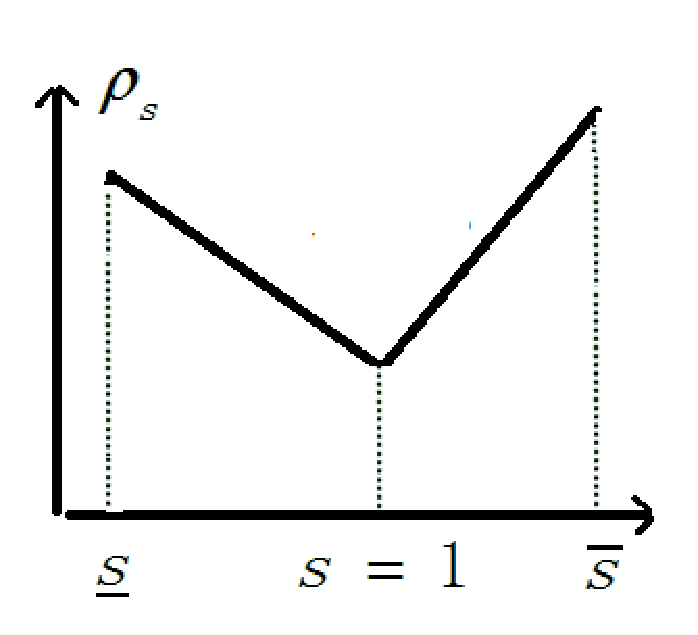}\\
  \caption{The trend of $\rho_s$ with respect to $s$.}
  \label{DiagramRho}
\end{figure}

By Proposition {\ref{Prop_gRIP_Coherence_RIP}}, it shows that the coherence and RIP are two special cases of gRIP,
thus we can easily obtain some recovery guarantees based on coherence and RIP respectively in the next two subsections.

\begin{remark}
From Theorem {\ref{conv_gRIP}}, we can see that the step size should lie in an appropriate interval,
which depends on the gRIP constant, which is generally NP-hard to verify.
However, we would like to emphasize that the theoretical result obtained in Thoeorem {\ref{conv_gRIP}} is of importance in theory
and it can give some insights and theoretical guarantees of the implementation of the AIT algorithm,
though it seems stringent.
Empirically, we find that a small interval of the step size, i.e., $[0.9,1]$ is generally sufficient for the convergence of the AIT algorithm.
This is also supported by the numerical experiments conducted in section VI.
In {\cite{kyrillidis2012combinatorial}}, it demonstrates that many algorithms perform well with either constant or adaptive step sizes.
In section VII C, we will discuss and compare different step-size schemes including the constant and an adaptive step-size strategies on the performance of AIT algorithms.
\end{remark}

\subsection{Characterization via Coherence}

Let $p=1, q = \infty.$ In this case,
$
L_1=(3-c_2)k^*
$,
$
L_2=(4+c_1)k^*,
$
and
$L=(3-c_2)k^*.$
According to Theorem {\ref{conv_gRIP}} and Proposition {\ref{Prop_gRIP_Coherence_RIP}},
assume that
$\mu <\frac{1}{(3-c_2)k^*}$, then the AIT algorithm converges linearly with the convergence rate constant
$$\rho_s = \gamma_s L = (|1-s|+s\mu)L<1$$
if we take $k=k^*$ and $\frac{1-\frac{1}{L}}{1-\mu}<s<\frac{1+\frac{1}{L}}{1+\mu}$.
In the following, we show that the constant $\gamma_s$ and thus $\rho_s$ can be further improved when $p=1$ and $q=\infty.$

\begin{thm}
\label{conv_Coherence}
Let $\{x^{(t)}\}$ be a sequence generated by the AIT algorithm for $b=Ax+\epsilon.$
Assume that $A$ satisfies $0<\mu< \frac{1}{(3-c_2)k^*}$,
and if we take
\begin{enumerate}
\item[(i)]
$k= k^*;$

\item[(ii)]
$1-\frac{1}{L} < s < \min\{\frac{1}{L\mu},1+\frac{1}{L}\},$
\end{enumerate}
then it holds
$$
\|x^{(t)}-x^*\|_1 \leq (\rho_s)^t \|x^*-x^{(0)}\|_1 + \frac{sL}{1-\rho_s}\|A^T \epsilon\|_{\infty},
$$
where  $\rho_s = \gamma_sL<1$ with
$$\gamma_s = \max\{|1-s|,s\mu\}.$$
Particularly, when $\epsilon=0$, it holds
$$
\|x^{(t)}-x^*\|_1 \leq (\rho_s)^t \|x^*-x^{(0)}\|_1.
$$
\end{thm}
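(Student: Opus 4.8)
The plan is to follow the skeleton of the proof of Theorem \ref{conv_gRIP} in the case $p=1$, $q=\infty$, but to replace the coarse estimate (\ref{Thm4.2}) by a sharper one that exploits the column‑normalization of $A$. First I would reuse the identity derived there,
$$z^{(t+1)} - x^* = (1-s)(x^{(t)} - x^*) + s(\mathbf{I}_n - A^TA)(x^{(t)} - x^*) + sA^T\epsilon,$$
restricted to $S^t = I^{t+1}_+ \cup I^t \cup I^*$, which (since $k=k^*$) has cardinality at most $3k^*+1$ and contains the supports of both $x^{(t+1)}$ and $x^{(t)}-x^*$.

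The key new ingredient is a per‑coordinate bound. For each $i\in S^t$, because $x^{(t)}-x^*$ is supported on $I^t\cup I^*$ and the diagonal entries of $\mathbf{I}_n-A^TA$ vanish (as $\|A_j\|_2=1$),
$$(z^{(t+1)} - x^*)_i = (1-s)(x^{(t)} - x^*)_i + s\sum_{j\neq i}(\mathbf{I}_n - A^TA)_{ij}(x^{(t)} - x^*)_j + s(A^T\epsilon)_i,$$
and $|(\mathbf{I}_n-A^TA)_{ij}|=|\langle A_i,A_j\rangle|\le\mu$ for $j\neq i$. Therefore
$$|(z^{(t+1)} - x^*)_i| \le (|1-s| - s\mu)\,|(x^{(t)} - x^*)_i| + s\mu\,\|x^{(t)} - x^*\|_1 + s\|A^T\epsilon\|_\infty.$$
Splitting into the cases $|1-s|\ge s\mu$ (use $|(x^{(t)}-x^*)_i|\le\|x^{(t)}-x^*\|_1$) and $|1-s|<s\mu$ (use $|(x^{(t)}-x^*)_i|\ge 0$) and maximizing over $i\in S^t$ yields
$$\|z^{(t+1)}_{S^t} - x^*_{S^t}\|_\infty \le \gamma_s\,\|x^{(t)} - x^*\|_1 + s\|A^T\epsilon\|_\infty,\qquad \gamma_s=\max\{|1-s|,\,s\mu\},$$
which is the analogue of (\ref{Thm4.2}) with the improved constant $\gamma_s$.

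For the reverse inequality I would invoke Steps a) and b) of the proof of Theorem \ref{conv_gRIP} with $p=1$, $q=\infty$ verbatim; they give $\|x^{(t)}-x^*\|_1\le L\,\|z^{(t)}_{S^{t-1}}-x^*_{S^{t-1}}\|_\infty$ with $L_1=(3-c_2)k^*$, $L_2=(4+c_1)k^*$, hence $L=\min\{L_1,L_2\}=(3-c_2)k^*$ since $c_1,c_2\ge 0$. Applying this at index $t+1$ and chaining with the bound above gives $\|x^{(t+1)}-x^*\|_1\le\rho_s\|x^{(t)}-x^*\|_1+sL\|A^T\epsilon\|_\infty$ with $\rho_s=\gamma_sL$. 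The hypothesis $1-\tfrac1L<s<\min\{\tfrac1{L\mu},\,1+\tfrac1L\}$ is exactly what forces $|1-s|<\tfrac1L$ and $s\mu<\tfrac1L$ simultaneously (note $L\ge 2>1$ and $L\mu<1$, so the interval is nonempty), whence $\gamma_s<\tfrac1L$ and $\rho_s<1$; a geometric‑series summation over the iterations then produces the stated bound, and setting $\epsilon=0$ removes the residual term.

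The only genuinely new point — and the reason the theorem improves on the specialization of Theorem \ref{conv_gRIP} — is the sharper factor $\max\{|1-s|,s\mu\}$ in place of the $|1-s|+s\mu$ a direct triangle inequality would give; the gain comes from absorbing the $(1-s)$ term and the coherence term coordinate‑wise rather than separately, using that $\mathbf{I}_n-A^TA$ has zero diagonal. Everything else is bookkeeping inherited from the proof of Theorem \ref{conv_gRIP}, so that step is where I expect the whole argument to hinge.
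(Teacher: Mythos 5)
Your proposal is correct and follows essentially the same route as the paper's proof: the paper also sharpens (\ref{Thm4.2}) by working with the matrix $B=(1-s)\mathbf{I}_n+s(\mathbf{I}_n-A^TA)$, noting $B_{ii}=1-s$ and $|B_{ij}|\leq s\mu$ for $i\neq j$ so that every entry is bounded by $\gamma_s=\max\{|1-s|,s\mu\}$ (your diagonal/off-diagonal split with the two cases is just a rewriting of this), and then reuses (\ref{Thm4_Lf}) with $p=1,q=\infty$, i.e. $L=(3-c_2)k^*$, to close the recursion exactly as you describe.
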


The proof of this Theorem is given in Appendix E.
As shown in Theorem \ref{conv_Coherence}, the constant $\gamma_s$ can be improved from $|1-s|+s\mu$ to $\max\{|1-s|, s\mu\}$,
and also the feasible range of the step size parameter $s$ gets larger from $\left(\frac{1-\frac{1}{L}}{1-\mu},\frac{1+\frac{1}{L}}{1+\mu}\right)$ to $\left(1-\frac{1}{L},\min\{\frac{1}{L\mu},1+\frac{1}{L}\}\right).$
We list the coherence-based convergence conditions of several typical AIT algorithms in Table {\ref{TableCoherence}}.
As shown in Table {\ref{TableCoherence}}, it can be observed that the recovery condition for Soft algorithm is the same as those of OMP {\cite{TroppOMPCoherence}} and BP {\cite{DonohoCoherence}}.

\begin{table} [h]
\centering
\caption{Coherence based conditions for different AIT algorithms}
\label{TableCoherence}
\vspace*{10pt}
\begin{tabular}{ccccc}
\toprule
AIT     & Hard   & Half     & Soft  & SCAD   \\
\midrule
$c_2$            &  0            & 0     & 1             & 0   \\
$\mu$            &  $\frac{1}{3k^*-1}$   & $\frac{1}{3k^*-1}$    & $\frac{1}{2k^*-1}$         & $\frac{1}{3k^*-1}$   \\
\bottomrule
\end{tabular}
\end{table}

\subsection{Characterization via RIP}

Let $p=2, q = 2.$ In this case,
$L_1=2+(3-c_2^2)k^*, L_2=4+2c_1^2k^*,$ and thus
$$L=\min \{ \sqrt{4+2c_1^2k^*}, \sqrt{2+(3-c_2^2)k^*}\}.$$
According to Theorem \ref{conv_gRIP}, and by Proposition {\ref{Prop_gRIP_Coherence_RIP}}, we can directly claim the following corollary.

\begin{coro}
\label{conv_RIP}
Let $\{x^{(t)}\}$ be a sequence generated by the AIT algorithm for
$b=Ax+\epsilon.$
Assume that $A$ satisfies $\delta_{3k^*+1} < \frac{1}{L}$,
and if we take
\begin{enumerate}
\item[(i)]
$k=k^*;$

\item[(ii)]
$\underline{s} \leq s \leq \overline{s}$, where
$
\underline{s} = \dfrac{1-\frac{1}{L}}{1-\delta_{3k^*+1}}
$
and
$
\overline{s} = \dfrac{1+\frac{1}{L}}{1+\delta_{3k^*+1}}.
$
\end{enumerate}
Then
$$
\|x^{(t)}-x^*\|_2 \leq (\rho_s)^t \|x^*-x^{(0)}\|_2 + \frac{sL}{1-\rho_s}\|A^T \epsilon\|_2,
$$
where $\rho_s = \gamma_sL<1$ with $\gamma_s = |1-s|+s\delta_{3k^*+1}.$
Particularly, when $\epsilon=0$, it holds
$$
\|x^{(t)}-x^*\|_2 \leq (\rho_s)^t \|x^*-x^{(0)}\|_2.
$$
\end{coro}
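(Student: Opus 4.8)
The plan is to derive Corollary~\ref{conv_RIP} directly from Theorem~\ref{conv_gRIP} by specializing the generalized parameters to the Euclidean setting $p=q=2$. First I would invoke Proposition~\ref{Prop_gRIP_Coherence_RIP}(ii), which gives $\beta_{k,2,2}=\delta_k$, so the gRIP hypothesis $\beta_{3k^*+1,p,q}<\frac{1}{L}$ becomes exactly $\delta_{3k^*+1}<\frac{1}{L}$. Next I would compute the constants $L_1$, $L_2$, and $L$ with $p=q=2$: since $\max\{1-\frac{p}{q},0\}=\max\{1-1,0\}=0$ and $\max\{\frac{1}{q}-\frac{1}{p},0\}=0$, the formulas collapse to $L_1=2^{2-1}(k^*)^0+(2^{2-1}-c_2^2+1)k^*=2+(3-c_2^2)k^*$ and $L_2=2^2(2k^*)^0+2^{2-1}c_1^2k^*=4+2c_1^2k^*$, hence $L=\min\{\sqrt{2+(3-c_2^2)k^*},\sqrt{4+2c_1^2k^*}\}$, matching the expression stated just before the corollary. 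Likewise $(2k^*)^{\max\{\frac{1}{q}-\frac{1}{p},0\}}=(2k^*)^0=1$, so the quantities $\underline{s}$, $\overline{s}$, and $\gamma_s$ from Theorem~\ref{conv_gRIP} reduce precisely to $\underline{s}=\frac{1-\frac{1}{L}}{1-\delta_{3k^*+1}}$, $\overline{s}=\frac{1+\frac{1}{L}}{1+\delta_{3k^*+1}}$, and $\gamma_s=|1-s|+s\delta_{3k^*+1}$.

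With these substitutions in hand, the conclusion of Theorem~\ref{conv_gRIP} in the $\|\cdot\|_p=\|\cdot\|_2$ and $\|\cdot\|_q=\|\cdot\|_2$ norms reads exactly
\[
\|x^{(t)}-x^*\|_2 \leq (\rho_s)^t\|x^*-x^{(0)}\|_2 + \frac{sL}{1-\rho_s}\|A^T\epsilon\|_2,
\]
with $\rho_s=\gamma_s L<1$, and the noise-free specialization $\epsilon=0$ follows immediately. The argument establishing $0<\rho_s<1$ under the stated hypotheses is the one already given right after the proof of Theorem~\ref{conv_gRIP}, applied verbatim with $(2k^*)^{\max\{\frac{1}{q}-\frac{1}{p},0\}}=1$; I would simply remark that it carries over. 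So the proof is essentially a bookkeeping exercise: check that every exponent that appears in the $\max\{\cdot,0\}$ form vanishes when $p=q=2$, rewrite the four quantities $L$, $\underline s$, $\overline s$, $\gamma_s$ accordingly, and quote Proposition~\ref{Prop_gRIP_Coherence_RIP}(ii) to convert $\beta_{3k^*+1,2,2}$ into $\delta_{3k^*+1}$.

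There is no real obstacle here; the only thing to be careful about is the boundary case of the step-size interval. Theorem~\ref{conv_gRIP} states the strict inequalities $\underline{s}<s<\overline{s}$, whereas Corollary~\ref{conv_RIP} writes $\underline{s}\leq s\leq\overline{s}$. I would either (a) note that at the endpoints one still has $\gamma_s=\frac{1}{L}$ exactly, so $\rho_s=1$ and the bound degenerates into $\|x^{(t)}-x^*\|_2\leq\|x^*-x^{(0)}\|_2 + tsL\|A^T\epsilon\|_2$ — which is still a valid (if non-contractive) statement and in the noise-free case gives mere boundedness — or (b) simply keep the strict inequalities as in the parent theorem and treat the endpoint version as the natural closure. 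I would flag this minor discrepancy and otherwise conclude the proof in two or three lines, since every nontrivial estimate has already been done in Theorem~\ref{conv_gRIP}.
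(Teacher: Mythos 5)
Your proposal matches the paper exactly: the corollary is obtained by specializing Theorem \ref{conv_gRIP} to $p=q=2$, using Proposition \ref{Prop_gRIP_Coherence_RIP}(ii) to identify $\beta_{3k^*+1,2,2}=\delta_{3k^*+1}$, and computing $L_1=2+(3-c_2^2)k^*$, $L_2=4+2c_1^2k^*$ just as the paper does before stating the corollary. Your observation about the strict versus non-strict step-size inequalities is a fair reading of a minor inconsistency in the paper's statement, but it does not affect the correctness of your argument.
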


According to Corollary {\ref{conv_RIP}},
the RIP based sufficient conditions for some typical AIT algorithms are listed in Table \ref{TableRIP}.

\begin{table} [h]
\centering
\caption{RIP based conditions for different AIT algorithms}
\label{TableRIP}
\vspace*{10pt}
\begin{tabular}{ccccc}
\toprule
AIT   &Hard   & Half     & Soft  & SCAD  \\
\midrule
$c_1$            &  0        & $1/3$          & 1             & 1  \\

$\delta_{3k^*+1}$  & $\frac{1}{2}$   & $\frac{3}{\sqrt{36+2k^*}}$  & $\frac{1}{\sqrt{2+2k^*}}$  & $\frac{1}{\sqrt{4+2k^*}}$\\
\bottomrule
\end{tabular}\\
\vspace{10pt}
\end{table}

Moreover, we note that the condition in Corollary {\ref{conv_RIP}} for Hard algorithm can be further improved via using the specific expression of the hard thresholding operator. This can be shown as the following theorem.

\begin{thm}
\label{Thm_RIP_Hard}
Let $\{x^{(t)}\}$ be a sequence generated by Hard algorithm for
$b=Ax+\epsilon.$
Assume that $A$ satisfies $\delta_{3k^*+1} < \frac{\sqrt{5}-1}{2}$,
and if we take $k=k^*$ and $s=1,$
then
\begin{align*}
\|x^{(t)}-x^*\|_2
&\leq \rho^t \|x^*-x^{(0)}\|_2 + \frac{\sqrt{5}+1}{2-2\rho}\|A^T \epsilon\|_2,
\end{align*}
where $\rho= \frac{\sqrt{5}+1}{2}\delta_{3k^*+1}<1$.
Particularly, when $\epsilon=0$, it holds
$$
\|x^{(t)}-x^*\|_2 \leq \rho^t  \|x^*-x^{(0)}\|_2.
$$
\end{thm}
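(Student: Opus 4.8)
The plan is to exploit the explicit form of the hard thresholding operator, for which the defining function is $f_\tau(u)=u$ and $c_1=c_2=0$, so that $x^{(t+1)}$ is precisely the best $k$-term $\ell_2$-approximation of $z^{(t+1)}$, i.e.\ the restriction of $z^{(t+1)}$ to the index set $I^{t+1}$ of its $k$ largest entries in magnitude. This structural fact will replace the crude factor $L=2$ coming from Corollary~\ref{conv_RIP} (the $p=q=2$, $c_1=c_2=0$ case of Theorem~\ref{conv_gRIP}, which only yields $\delta_{3k^*+1}<\tfrac12$) by the sharp golden-ratio factor $\phi:=\frac{\sqrt5+1}{2}$.

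First I would retain, essentially unchanged, the opening of the proof of Theorem~\ref{conv_gRIP} specialized to $p=q=2$, $s=1$, $k=k^*$. Taking $S^t=I^{t+1}_+\cup I^t\cup I^*$ (so $|S^t|\le 3k^*+1$), the identity $b=Ax^*+\epsilon$ together with Proposition~\ref{Prop_gRIP_Coherence_RIP}(ii) and the RIP form (\ref{RIPEquivelent}) of $\delta_{3k^*+1}$ yields the one-step estimate
\[
\|z^{(t+1)}_{S^t}-x^*_{S^t}\|_2\ \le\ \delta_{3k^*+1}\,\|x^{(t)}-x^*\|_2+\|A^T\epsilon\|_2 .
\]
The substantive new ingredient is to upgrade inequality (\ref{Thm4_L}), which in the hard case only produces the factor $2$, to
\[
\|x^{(t+1)}-x^*\|_2\ \le\ \phi\,\|z^{(t+1)}_{S^t}-x^*_{S^t}\|_2 .
\]
To obtain this, set $e:=z^{(t+1)}-x^*$ and partition $T:=I^{t+1}\cup I^*$ into $\Omega_1=I^{t+1}\cap I^*$, $\Omega_2=I^{t+1}\setminus I^*$, $\Omega_3=I^*\setminus I^{t+1}$, noting $|\Omega_2|=|\Omega_3|$ since $|I^{t+1}|=|I^*|=k^*$. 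Because hard thresholding keeps exact values on $I^{t+1}$ and is zero elsewhere, a coordinatewise check gives $(x^{(t+1)}-x^*)_{\Omega_1}=e_{\Omega_1}$, $(x^{(t+1)}-x^*)_{\Omega_2}=z^{(t+1)}_{\Omega_2}=e_{\Omega_2}$, $(x^{(t+1)}-x^*)_{\Omega_3}=e_{\Omega_3}-z^{(t+1)}_{\Omega_3}$, and $x^{(t+1)}-x^*$ vanishes off $T$, whence
\[
\|x^{(t+1)}-x^*\|_2^2=\|e_{\Omega_1}\|_2^2+\|e_{\Omega_2}\|_2^2+\|e_{\Omega_3}-z^{(t+1)}_{\Omega_3}\|_2^2 .
\]

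The crux — and the step I expect to require the most care — is controlling the last term. Since $I^{t+1}$ consists of the $k$ coordinates of $z^{(t+1)}$ largest in magnitude, each entry of $z^{(t+1)}$ on $\Omega_2\subseteq I^{t+1}$ dominates in magnitude each entry on $\Omega_3\subseteq (I^{t+1})^c$; coupled with $|\Omega_2|=|\Omega_3|$ this forces $\|z^{(t+1)}_{\Omega_3}\|_2\le\|z^{(t+1)}_{\Omega_2}\|_2=\|e_{\Omega_2}\|_2$. Writing $\alpha=\|e_{\Omega_1}\|_2$, $\beta=\|e_{\Omega_2}\|_2$, $\gamma=\|e_{\Omega_3}\|_2$, the triangle inequality then gives $\|x^{(t+1)}-x^*\|_2^2\le\alpha^2+\beta^2+(\gamma+\beta)^2$, and since $\alpha^2+\beta^2+\gamma^2\le\|z^{(t+1)}_{S^t}-x^*_{S^t}\|_2^2$ it suffices to check $\alpha^2+\beta^2+(\gamma+\beta)^2\le\phi^2(\alpha^2+\beta^2+\gamma^2)$. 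Using $\phi^2=\phi+1$ and discarding the harmless $\alpha^2$ term, this reduces to $2\beta\gamma\le\frac1\phi\beta^2+\phi\gamma^2$, which is precisely the weighted arithmetic--geometric mean inequality with the weight forced to equal $\phi$; hence the golden ratio is the sharp constant here. Combining the two displayed estimates gives $\|x^{(t+1)}-x^*\|_2\le\rho\,\|x^{(t)}-x^*\|_2+\phi\|A^T\epsilon\|_2$ with $\rho=\phi\,\delta_{3k^*+1}$, which is $<1$ exactly when $\delta_{3k^*+1}<\frac{\sqrt5-1}{2}=1/\phi$. Iterating this recursion (mirroring the induction in the proof of Theorem~\ref{conv_gRIP}) and summing the geometric series produces the constant $\frac{\phi}{1-\rho}=\frac{\sqrt5+1}{2-2\rho}$ multiplying $\|A^T\epsilon\|_2$, and the noise-free statement follows by putting $\epsilon=0$.
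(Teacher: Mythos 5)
Your proposal is correct and follows essentially the same route as the paper's proof: the same one-step RIP estimate $\|z^{(t+1)}_{S^t}-x^*_{S^t}\|_2\le\delta_{3k^*+1}\|x^{(t)}-x^*\|_2+\|A^T\epsilon\|_2$ inherited from Theorem~\ref{conv_gRIP}, the same splitting into $I\cap I^*$, $I\setminus I^*$, $I^*\setminus I$ with the cardinality-matching swap $\|z_{I^*\setminus I}\|_2\le\|z_{I\setminus I^*}\|_2$, and the same golden-ratio weighted inequality (the paper applies $(a+b)^2\le\frac{\sqrt5+3}{2}a^2+\frac{\sqrt5+1}{2}b^2$ coordinatewise, while you apply the triangle inequality and then $2\beta\gamma\le\frac{1}{\phi}\beta^2+\phi\gamma^2$ at the norm level, which is the identical estimate). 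The concluding recursion and geometric-series summation also match the paper's argument.
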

The proof of Theorem \ref{Thm_RIP_Hard} is presented in Appendix F.

\section{Comparison with previous works}

This section discusses some related works of the AIT algorithm,
and then compares its computational complexity and sufficient conditions for convergence with other algorithms.

\subsubsection{On related works of the AIT algorithm}

In {\cite{MalekiITA2009}}, Maleki provided some similar results for two special AIT algorithms, i.e., Hard and Soft algorithms
with $k=k^*$ and $s=1$ for the noiseless case.
The sufficient conditions for convergence are $\mu<\frac{1}{3.1k^*}$ and  $\mu<\frac{1}{4.1k^*}$ for Hard and Soft algorithms, respectively.
In {\cite{ZengAIT2014}}, Zeng et al. improved and extended Maleki's results to a wide class of the AIT algorithm with step size $s=1$.
The sufficient condition based on coherence was improved to $\mu < \frac{1}{(3+c_1)k^*},$ where the boudedness parameter $c_1$ can be found in Table {\ref{TableBoundPar}}.
Compared with these two tightly related works, several significant improvements are made in this paper.
\begin{enumerate}
\item[(i)]
{\bf Weaker convergence conditions.}
The conditions obtained in this paper is weaker than those in both {\cite{MalekiITA2009}} and {\cite{ZengAIT2014}}.
More specifically, we give a unified convergence condition based on the introduced gRIP.
Particularly, as shown in Theorem {\ref{conv_Coherence}}, the coherence based conditions for convergence are $\mu<\frac{1}{(3-c_2)k^*-1}$,
which is much better than the condition $\mu < \frac{1}{(3+c_1)k^*}$ obtained in {\cite{ZengAIT2014}}.
Moreover, except Hard algorithm, we firstly show the convergence of the other AIT algorithms  based on RIP.

\item[(ii)]
{\bf Better convergence rate.}
The asymptotic linear convergence rate was justified in both {\cite{MalekiITA2009}} and {\cite{ZengAIT2014}}.
However, in this paper, we show the global linear convergence rate of the AIT algorithm,
which means it converges at a linear rate from the first iteration.

\item[(iii)]
{\bf More general model.}
In this paper, besides the noiseless model $b=Ax$, we also consider the performance of the AIT algorithm for the noisy model $b=Ax+\epsilon$,
which is very crucial since the noise is almost inevitable in practice.

\item[(iv)]
{\bf More general algorithmic framework.}
In both {\cite{MalekiITA2009}} and {\cite{ZengAIT2014}}, the AIT algorithm was only considered with unit step size ($s=1$).
While in this paper, we show that the AIT algorithm converges when $s$ is in an appropriate range.

\end{enumerate}

Among these AIT algorithms, Hard algorithm has been widely studied.
In {\cite{CaiCoherence}}, it was demonstrated that if $A$ has unit-norm columns and coherence $\mu$, then $A$ has the $(r,\delta_r)$-RIP with
\begin{equation}
\delta_r \leq (r-1)\mu.
\label{RIP-Coh}
\end{equation}
In terms of RIP, Blumensath and Davies {\cite{Blumensath08CS}} justified the performance of Hard algorithm when applied to signal recovery problem.
It was shown that if $A$ satisfies a certain RIP with $\delta_{3k^*}<\frac{1}{\sqrt{32}}$,
then Hard algorithm has global convergence guarantee.
Later, Foucart improved this condition to $\delta_{3k^*}<\frac{1}{2}$ or $\delta_{2k^*}<\frac{1}{4}$ {\cite{FoucartRIP2010}} and further improved it to $\delta_{3k^*}<\frac{1}{\sqrt{3}}\approx 0.5773$ (Theorem 6.18, \cite{Foucart2013Introduction}).
Now we can improve this condition to $\delta_{3k^*+1}<\frac{\sqrt{5}-1}{2} \approx 0.618$ as shown by Theorem {\ref{Thm_RIP_Hard}}.

\subsubsection{On comparison with other algorithms}

For better comparison, we list the state-of-the-art results on sufficient conditions of some typical algorithms including BP, OMP, CoSaMP, Hard, Soft, Half and general AIT algorithms in Table {\ref{TableERC}}.

\begin{table} [h!]
\caption{Sufficient Conditions for Different Algorithms}
\label{TableERC}
\begin{tabular}{ccc}
\toprule
Algorithm   & $\mu$                          & $(r, \delta_r)$ \\ \midrule
BP          & $\frac{1}{2k^*-1}^{({\cite{DonohoCoherence}})}$       & $(2k^*,0.707)^{(\cite{CaiSharpRIP2014})}$  \\ \hline
OMP         & $\frac{1}{2k^*-1}^{({\cite{TroppOMPCoherence}})}$       & $(13k^*,\frac{1}{6})^{(\text{Thm. 6.25},\cite{Foucart2013Introduction})}$  \\ \hline
CoSaMP      & $\frac{0.384}{4k^*-1}^\star$   & $(4k^*,0.384)^{({\cite{CoSaMP}})}$ \\ \hline
Hard        & $\frac{1}{3k^*-1}^{\text{(Thm. {\ref{conv_Coherence}})}}$ & $(3k^*$+1$,0.618)^{\text{(Thm. {\ref{Thm_RIP_Hard}})}}$\\ \hline
Soft        & $\frac{1}{2k^*-1}^{\text{(Thm. {\ref{conv_Coherence}})}}$                & $(3k^*$+1$,\frac{1}{\sqrt{2+2k^*}})^{\text{(Coro. {\ref{conv_RIP}})}}$ \\ \hline
Half        & $\frac{1}{3k^*-1}^{\text{(Thm. {\ref{conv_Coherence}})}}$              & $(3k^*$+1$,\frac{3}{\sqrt{36+2k^*}})^{\text{(Coro. {\ref{conv_RIP}})}}$ \\ \hline
General AIT   & $\frac{1}{(3-c_2)k^*-1}^{\text{(Thm. {\ref{conv_Coherence}})}}$         & $(3k^*$+1$,\frac{1}{\sqrt{4+2c_1^2k^*}})^{\text{(Coro. {\ref{conv_RIP}})}}$\\ \bottomrule
\end{tabular}
\\
$\star$: a coherence based sufficient condition for CoSaMP derived by the fact that $\delta_{4k^*}<0.384$ and $\delta_r\leq (r-1)\mu$.
\vspace*{-10pt}
\end{table}

From Table {\ref{TableERC}}, in the perspective of coherence,
 the sufficient conditions of AIT algorithms are slightly stricter than those of BP and OMP algorithms except Soft algorithm.
However, AIT algorithms are generally faster than both BP and OMP algorithms with lower computational complexities, especially for large scale applications due to their linear convergence rates.
As shown in the next section, the number of iterations required for the convergence of the AIT algorithm is empirically of the same order of the original sparsity level $k^*$, that is, $\mathcal{O}(k^*)$.
At each iteration of the AIT algorithm, only some simple matrix-vector multiplications and a projection on the vector need to be done,
and thus the computational complexity per iteration is $\mathcal{O}(mn)$.
Therefore, the total computational complexity of the AIT algorithm is $\mathcal{O}(k^*mn)$.
While the total computational complexities of BP and OMP algorithms are generally $\mathcal{O}(m^2n)$ and $\max\{\mathcal{O}(k^*mn), \mathcal{O}(\frac{(k^*)^2(k^*+1)^2}{4})\}$, respectively.
It should be pointed out that the computational complexity of OMP algorithm is related to the commonly used halting rule of OMP algorithm,
that is, the number of maximal iterations is set to be the true sparsity level $k^*$.

 Another important greedy algorithm, CoSaMP algorithm, identifies multicomponents (commonly $2k^*$) at each iteration.
From Table \ref{TableERC}, the RIP based sufficient condition of CoSaMP is $\delta_{4k^*}<0.384$ and a deduced coherence based sufficient condition is $\mu<\frac{0.384}{4k^*-1}$.
In the perspective of coherence, our conditions for AIT algorithms are better than CoSaMP, though this comparison is not very reasonable.
On the other hand, our conditions for AIT algorithms except Hard algorithm are generally worse than that of CoSaMP in the perspective of RIP.
However, when the true signal is very sparse, the conditions of AIT algorithms may be better than that of CoSaMP.
At each iteration of CoSaMP algorithm, some simple matrix-vector multiplications and a least squares problem should be considered.
Thus, the computational complexity per iteration of CoSaMP algorithm is generally $\max\{\mathcal{O}(mn), \mathcal{O}((3k^*)^3)\}$,
which is higher than those of AIT algorithms, especially when $k^*$ is relatively large.

Besides BP and greedy algorithms, another class of tightly related algorithms is the reweighted
techniques that have been also widely used for solutions to $\ell_{q}$
regularization with $q\in(0,1)$. Two well-known examples of such reweighted
techniques are the iterative reweighted least squares (IRLS) method {\cite{FOCUSS}} and
the reweighted $l_{1}$ minimization (IRL1) method {\cite{Candes2008RL1}}.
The convergence analysis conducted in {\cite{DaubechiesIRLS}} shows
that the IRLS method converges with an asymptotically superlinear convergence rate
under the assumptions that $A$ possesses a certain null-space property (NSP).
However, from Theorem {\ref{conv_gRIP}}, the rates of convergence of AIT algorithms are globally linear.
Furthermore, Lai et al. {\cite{IRLSLai2013}} applied the IRLS method to the unconstrained $l_q$ minimization problem
and also extended the corresponding convergence results to the matrix case.
It was shown also in {\cite{IRL1Chen}} that the IRL1 algorithm can converge to a stationary
point and the asymptotic convergence speed is approximately linear when applied to the unconstrained $l_q$ minimization problem.
Both in {\cite{IRLSLai2013}} and {\cite{IRL1Chen}},
the authors focused on the unconstrained $l_q$ minimization problem with a fixed regularization parameter $\lambda$,
while in this paper, we focus on a different model with an adaptive regularization parameter.

\section{Discussion}

In this section, we numerically discuss some practical issues on the implementation of AIT algorithms,
especially, the effects of several algorithmic factors including the estimated sparsity level parameter,
the column-normalization operation, different step-size strategies as well as the formats of different thresholding operators
on the performance of AIT algorithms.
Moreover,  we will further demonstrate the performance of several typical AIT algorihtms including Hard, Half and SCAD
via comparing with many state-of-the-art algorithms
such as CGIHT \cite{Blanchard2014}, CoSaMP {\cite{CoSaMP}}, 0-ALPS(4) \cite{kyrillidis2011recipes}
in the perspective of the 50\% phase transition curves \cite{Blanchard2013,Sturm2011}.

\subsection{Robustness of the estimated sparsity level}

In the preceding proposed algorithms,
the specified sparsity level parameter $k$ is taken
exactly as the true sparsity level $k^*$,
which is generally unknown in practice.
Instead, we can often obtain a rough estimate of the true sparsity level.
Therefore,
in this experiment,
we will explore the performance of the AIT algorithm with a variety of specified sparsity levels.
We varied $k$ from 1 to 150 while kept $k^*=15.$
The experiment setup is the same with Section VI. A.

\begin{figure}
\begin{minipage}[b]{.49\linewidth}
\centering
\includegraphics*[scale=0.49]{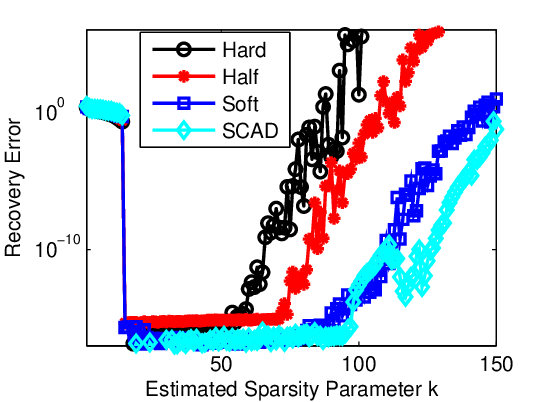}
\centerline{{\small (a) Robust (Noiseless)}}
\end{minipage}
\hfill
\begin{minipage}[b]{.49\linewidth}
\centering
\includegraphics*[scale=0.49]{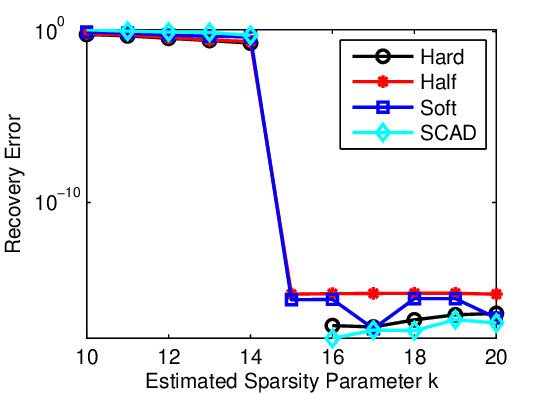}
\centerline{{\small (b) Detail (Noiseless)}}
\end{minipage}
\hfill
\begin{minipage}[b]{.49\linewidth}
\centering
\includegraphics*[scale=0.49]{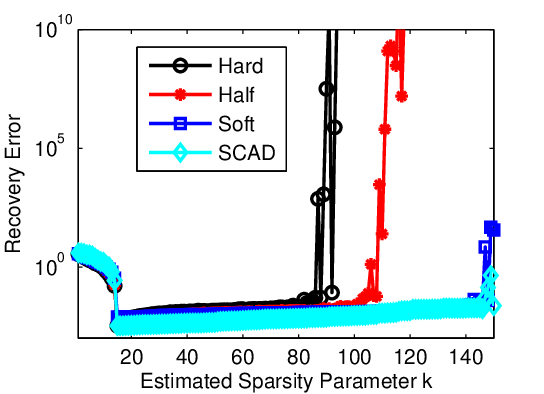}
\centerline{{\small (c) Robust (Noisy)}}
\end{minipage}
\hfill
\begin{minipage}[b]{.49\linewidth}
\centering
\includegraphics*[scale=0.49]{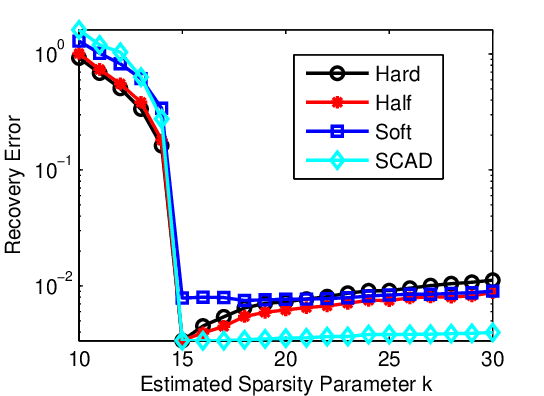}
\centerline{{\small (d) Detail (Noisy)}}
\end{minipage}
\hfill
\caption{
On robustness of the specified sparsity level.
(a) The trends of the recovery precision with different estimated sparsity levels in noiseless case.
(b) The detailed trends of the recovery precision with different estimated sparsity levels in noiseless case.
(c) The trends of the recovery precision with different estimated sparsity levels in noiseless case.
(d) The detailed trends of the recovery precision with different estimated sparsity levels in noisy case.
}
\label{Exp_Robust_k}
\end{figure}

From Fig. {\ref{Exp_Robust_k}}, we can observe that these AIT algorithms are efficient for a wide range of  $k$.
Interestingly, the point $k=k^*$ is a break point of the performance of all these AIT algorithms. When $k<k^*,$ all AIT algorithms fail to recover the original sparse signal,
while when $k\geq k^*$, a wide interval of $k$ is allowed for small recovery errors,
as shown in Fig. {\ref{Exp_Robust_k}} (b) and (d).
In the noise free case, if $\|x^{(t)}-x^*\|_2<10^{-10}$, the feasible intervals of the specified sparsity level $k$
are $[15,109]$ for SCAD and Soft, $[15,81]$ for Half and $[15,65]$ for Hard, respectively.
This observation is very important for real applications of AIT algorithms because $k^*$ is usually unknown.
In the noisy case, if $\|x^{(t)}-x^*\|_2<10^{-2}$, the feasible intervals of sparsity level $k$ are
$[15,105]$ for SCAD, $[15,40]$ for Soft, $[15,37]$ for Half and $[15,26]$ for Hard, respectively.

\subsection{With vs Without Normalization}

As shown in Algorithm 1, the column-normalization on the measurement matrix $A$ is required in consideration of
a clearer definition of the introduced gRIP and more importantly, better theoretical analyses.
However, in this subsection, we will conduct a series of simulations to show that such requirement is generally not necessary in practice.
The experiment setup is similar to Section VI.A.
More specifically, we set $m=250,$ $n=400$ and $k^* =15$.
The nonzero components of $x^*$ were generated randomly according to the standard Gaussian distribution.
The matrix $A$ was generated from i.i.d Gaussian distribution $\mathcal{N}(0,1/250)$ without normalization.
In order to adopt Algorithm 1, we let $\Lambda$ be the corresponding column-normalized factor matrix of $A$
(i.e., $\Lambda$ is a diagonal matrix and its diagonal element is the $l_2$-norm of the corresponding column of $A$),
and $\hat{A} = A\Lambda^{-1}$ be the corresponding column-normalized measurement matrix.
Assume that $\hat{x}$ is a recovery via Algorithm 1 corresponding to $\hat{A}$,
then $\bar{x} = \Lambda^{-1}\hat{x}$ is the corresponding recovery of $x^*$.
For each algorithm, we conducted 10 times experiments independently in both noiseless and noise (signal-to-noise ratio (SNR): 60dB) cases,
and recorded the average recovery precision.
The recovery precision is defined as $\frac{\|\tilde{x} -x^*\|_2}{\|x^*\|_2}$,
where $\tilde{x}$ and $x^*$ represent the recovery and original signal, respectively.
The experiment results are shown in Table {\ref{TableAITnoiseless}} and {\ref{TableAITnoise}}.

\begin{table}

\caption{The recovery precision of different AIT algorithms with or without column-normalization (noiseless case)}
  \begin{tabular}{ccccc}
  \toprule
  Algorithm& Hard&Soft&Half&SCAD\\
  \midrule
  no normalization&5.719e-6&1.425e-8&5.062e-6&9.330e-9\\
  normalization&5.703e-5&1.437e-8&5.935e-5&8.505e-9\\
  \bottomrule
  \end{tabular}
  \label{TableAITnoiseless}
\end{table}

\begin{table}

\caption{The recovery precision of different AIT algorithms with or without column-normalization (with 60dB noise)}
  \begin{tabular}{ccccc}
  \toprule
  Algorithm& Hard&Soft&Half&SCAD\\
  \midrule
  no normalization&1.217e-3&5.739e-3&1.206e-3&1.282e-3\\
  normalization&1.214e-3&5.498e-3&1.205e-3&1.264e-3\\
  \bottomrule
  \end{tabular}
  \label{TableAITnoise}
\end{table}

From Table {\ref{TableAITnoiseless}} and {\ref{TableAITnoise}},
we can see that the column-normalization operator has almost no effect on the performance of the AIT algorithm in both noiseless and noise cases.
Therefore, in the following experiments,
we will adopt the more practical AIT algorithm without the column-normalization for better comparison with the other algorithms.

\subsection{Constant vs Adaptive Step Size}

From Algorithm 1, we only consider the constant step-size.
However, according to many previous and empirical studies \cite{kyrillidis2011recipes, Blumensath2009},
we have known that certain adaptive step-size strategies may improve the performance of AIT algorithms.
In this subsection, we will compare the performance of two different step-size schemes, i.e.,
the constant step-size strategy and an adaptive step-size strategy introduced in {\cite{Blumensath2009}}
via the so-called 50\% phase transition curve \cite{Sturm2011}.
More specifically, the adaptive step-size scheme can be described as follows.
Assume that $x^{(t)}$ is the $t$-th iteration, then at $(t+1)$-th iteration, the step size $s^{(t+1)}$ is set as
\begin{equation}\label{AdaptiveSize}
s^{(t+1)} = \frac{\|(A^T(b-Ax^{(t)}))_{I^t}\|^2}{\|(AA^T(b-Ax^{(t)}))_{I^t}\|^2},
\end{equation}
where $I^t$ is the support set of $x^{(t)}$, $A$ is the measurement matrix and $b$ is the measurement vector.
Similar to {\cite{Blumensath2009}}, we will call the AIT algorithm with such adaptive step-size strategy the normalised AIT (NAIT) algorithm,
and correspondingly, several typical AIT algorithms such as Hard, Soft, Half and SCAD algorithms with such adaptive step-size strategy NHard,
NSoft, NHalf and NSCAD for short, respectively.
Note that NHard algorithm studied here is actually the same with the normalised iterative hard thresholding (NIHT) algorithm proposed in {\cite{Blumensath2009}}.

50\% phase transition curve was first introduced in \cite{Donoho06PhaseTransition} and
has been widely used to compare the recovery ability for different algorithms in compressed sensing \cite{Blanchard2013,Sturm2011}.
For a fixed $n$,  any given problem setting $(k,m,n)$ can depict a point in the space $(m/n,k/m) \in (0,1]^2$.
For any algorithm, its 50\% phase transition curve is actually a function $f$ on the $(k/m,m/n)$ space.
More specifically, if the point $(m/n,k/m)$ lies below the curve of the algorithm, i.e. $k/m<f(m/n)$,
then it means the algorithm could recover the sparse signal from the given $(k,m,n)$-problem with high probability,
otherwise the successful recovery probability is very low \cite{Donoho06PhaseTransition}.
Moreover, the 50\% phase transition curve usually depends on the prior distribution of $x^*$ as depicted in many researches \cite{BlumensathHard08,Blanchard2013,Sturm2011}.

In these experiments, we consider two common distributions of $x^*$,
the first one is the standard Gaussian distribution,
and the second one is a binary distribution, which takes $-1$ or $1$ with an equal probability.
For any given $(k,m,n)$,
the measurement matrix $A\in \mathbf{R}^{m\times n}$ is generated from the Gaussian distribution $\mathcal{N}(0,\frac{1}{m})$,
and the nonzero components of the original $k$-sparse signal $x^*$ are generated
independently and identically distribution (i.i.d.) according to the Gaussian or binary distributions.
For any experiment, we consider it as a successful recovery if
\[
\frac{\|\tilde{x}-x^*\|_{\infty}}{\|x^*\|_{\infty}}\leq 10^{-3},
\]
where $x^*$ is the original sparse signal and $\tilde{x}$ is the corresponding recovery signal.
We set $n=512$, $m=50,100,...,500$.
To determine $f(m/n)$, we exploit a bisection search scheme as the same as the experiment setting in \cite{Blanchard2013}.
We compare the 50\% phase transition curves of Hard, Soft, Half and SCAD algorithms
with their adaptive step-size versions, i.e., NHard, NSoft, NHalf, NSCAD in Fig. \ref{Exp_normalize}.

From Fig. \ref{Exp_normalize} (a) and (c), we can see that the performances of
all AIT algorithms except Soft algorithm adopting the adaptive step-size strategy (\ref{AdaptiveSize})
are significantly better than those of the corresponding AIT algorithms with a constant step size in the Guassian case.
In this case, NSCAD has the best performance, then NHalf and NHard,
while NSoft is the worst.
The performance of NSCAD is slightly better than those of NHalf and NHard, and much better than NSoft.
While for the binary case, as shown in Fig. \ref{Exp_normalize} (b) and (d),
NSCAD breaks down with the curve fluctuating around 0.1 while NHalf and NHard still perform well.
In the binary case, Soft as well as NSoft perform the worst.
In addition, we can see that the performances of Soft and NSoft are almost the same in all cases,
which means that such adaptive step-size strategy (\ref{AdaptiveSize}) may not bring the improvement on the performance of Soft algorithm.
Moreover, some interesting phenomena can also be observed in Fig. \ref{Exp_normalize},
that is, the performance of the AIT algorithm depends to some extent on the choice of the thresholding operator,
and for different prior distributions of the original sparse signal, the AIT algorithm may perform very different.
For these phenomena, we will study in the future work.

\begin{figure}
\begin{minipage}[b]{.49\linewidth}
\centering
\includegraphics*[scale=0.4]{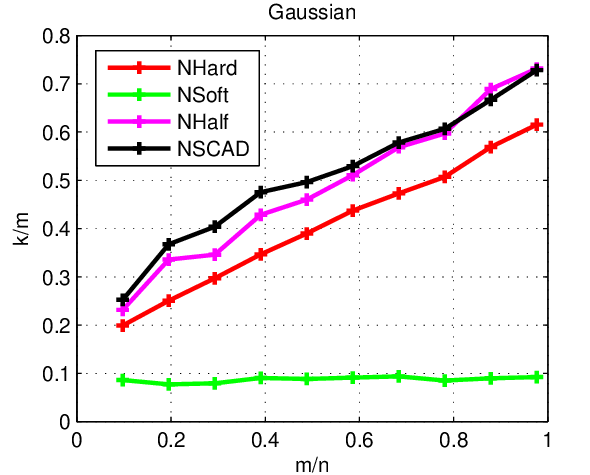}
\centerline{{\small (a) NAIT for Gaussian case}}
\end{minipage}
\hfill
\begin{minipage}[b]{.49\linewidth}
\centering
\includegraphics*[scale=0.4]{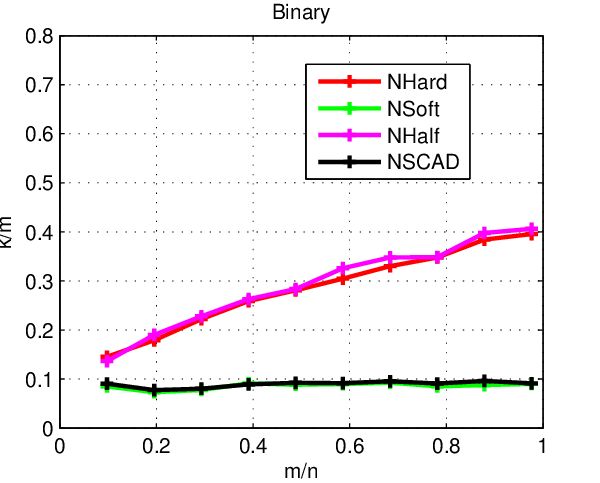}
\centerline{{\small (b) NAIT for Binary case}}
\end{minipage}
\hfill
\begin{minipage}[b]{.49\linewidth}
\centering
\includegraphics*[scale=0.4]{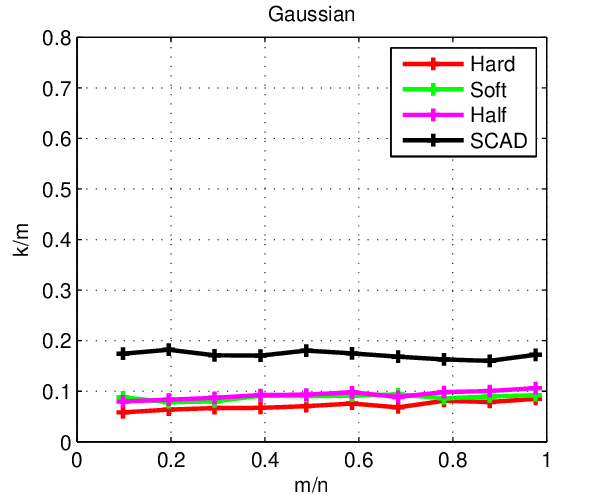}
\centerline{{\small (c) AIT for Gaussian case}}
\end{minipage}
\hfill
\begin{minipage}[b]{.49\linewidth}
\centering
\includegraphics*[scale=0.4]{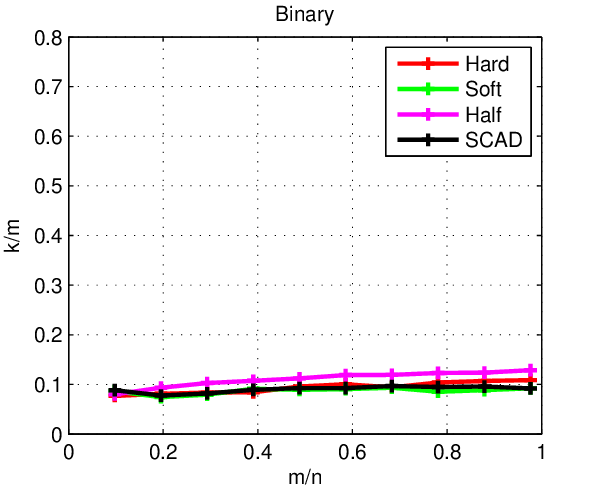}
\centerline{{\small (d) AIT for Binary case}}
\end{minipage}
\hfill
\caption{ 50\% phase transition curves of different AIT algorithms with two different step-size schemes.
(a) AIT algorithms with an adaptive step size for Gaussian case.
(a) AIT algorithms with an adaptive step size for Binary case.
(c) AIT algorithms with a constant step size for Gaussian case.
(d) AIT algorithms with a constant step size for Binary case.
}
\label{Exp_normalize}
\end{figure}

\subsection{Comparison with the State-of-the-art Algorithms}

We also compare the performance of several AIT algorithms including NHard, NSCAD and NHalf
with some typical state-of-the-art algorithms
such as conjugate gradient iterative hard thresholding (CGIHT) \cite{Blanchard2014}, CoSaMP {\cite{CoSaMP}}, 0-ALPS(4) \cite{kyrillidis2011recipes} in terms of their 50\% phase transition curves.
For more other algorithms like MP \cite{MallatMP1993}, HTP \cite{foucart2011hard}, OMP \cite{PatiOMP1993}, CSMPSP \cite{CSMPSP}, CompMP \cite{CompMP}, OLS \cite{OLS} etc.,
their 50\% phase transition curves can be found in \cite{Sturm2011}, and we omit them here.
For all considered algorithms, the estimated sparsity level parameters are set to be the true sparsity level of $x^*$.
The result is shown in Fig \ref{Exp_PhaseTransition}.

\begin{figure}
\begin{minipage}[b]{.49\linewidth}
\centering
\includegraphics*[scale=0.4]{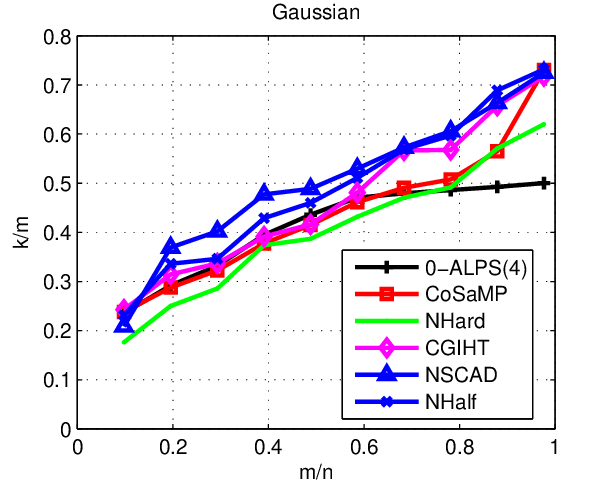}
\centerline{{\small (a) Gaussian Distribution}}
\end{minipage}
\hfill
\begin{minipage}[b]{.49\linewidth}
\centering
\includegraphics*[scale=0.4]{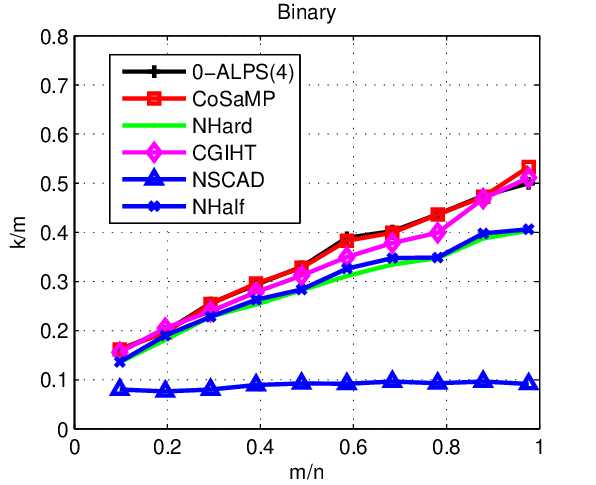}
\centerline{{\small (b) Binary Distribution}}
\end{minipage}
\hfill
\caption{ 50\% phase transition curves of different algorithms.
(a) Gaussian distribution case.
(b) Binary distribution case.
}
\label{Exp_PhaseTransition}
\end{figure}

From Fig. \ref{Exp_PhaseTransition},
we can see that almost all algorithms have better performances for the Gaussian distribution case than for the binary distribution case,
especially NSCAD algorithm.
More specifically,
as shown in Fig. \ref{Exp_PhaseTransition}(a), for the Gaussian distribution,
NSCAD has the best performance among all these algorithms,
and NHalf is slightly worse than NSCAD and better than the other algorithms.
While in the binary case, it can be seen from Fig. \ref{Exp_PhaseTransition}(b),
all AIT algorithms perform worse than the other algorithms like CGIHT, CoSaMP, 0-ALPS(4),
especially, NSCAD algorithm is much worse than the other algorithms.
These experiments demonstrate that AIT algorithms are more appropriate for the recovery problems
that the original sparse signals obey the Gaussian distribution.

\section{Conclusion}

We have conducted a study of a wide class of AIT algorithms for compressed sensing.
It should be pointed out that almost all of the existing iterative thresholding algorithms like Hard, Soft, Half and SCAD  are included in such class of algorithms.
The main contribution of this paper is the establishment of the convergence analyses of the AIT algorithm.
In summary, we have shown
when the measurement matrix satisfies a certain gRIP condition,
the AIT algorithm can converge to the original sparse signal at a linear rate in the noiseless case,
and approach to the original sparse signal at a linear rate until achieving an error bound in the noisy case.
As two special cases of gRIP, the coherence and RIP based conditions can be directly derived for the AIT algorithm.
Moreover, the tightness of our analyses can be demonstrated by two specific cases, that is,
the coherence-based condition for Soft algorithm
is the same as those of OMP and BP,
and the RIP based condition for Hard algorithm is
better than the recent result $\delta_{3k^*}<\frac{1}{\sqrt{3}}\approx 0.5773$ obtained in Theorem 6.18 in \cite{Foucart2013Introduction}.
Furthermore,
the efficiency of the algorithm and the correctness of the theoretical results are also verified via a series of numerical experiments.

In section VII, we have numerically discussed many practical issues on the implementation of AIT algorithms,
including the specified sparsity level parameter $k$, the column-normalization requirement as well as
different step-size setting schemes.
We can observe the following several interesting phenomena:
\begin{enumerate}
\item[(i)]
The AIT algorithm is robust to the specified sparsity level parameter $k$,
that is, the parameter $k$ can be specified in a large range to guarantee the well performance of the AIT algorithm.

\item[(ii)]
The column-normalization of the measurement matrix $A$ is not necessary for the use of AIT algorithms in the perspective of the recovery performance.

\item[(iii)]
Some adaptive step-size strategies may significantly improve the performance of AIT algorithms.

\item[(iv)]
The performance of AIT algorithm depends to some extent on the prior distribution of the original sparse signal.
Compared with the binary distribution, AIT algorithms are more appropriate for the recovery of the sparse signal generated by the Gaussian distribution.

\item[(v)]
The performance of the AIT algorithm depends on the specific thresholding operator.
\end{enumerate}
All of these phenomena are of interest, and we will study them in our future work.

\section*{Acknowledgements}

The authors would like to thank Prof. Wotao Yin,
Department of Mathematics, University of California, Los Angeles (UCLA), Los Angeles, CA, United States,
for a careful discussion of the manuscript, which led to a significant improvement of this paper.
Moreover, we thank the three anonymous reviewers, the associate editor and the potential reviewers for their
constructive and helpful comments.

This work was partially supported by the National 973 Programs (Grant No. 2013CB329404),
the Key Program of National Natural Science Foundation of China (Grants No.
11131006), the National Natural Science Foundations of China (Grants No. 11001227, 11171272, 11401462),
NSF Grants NSF DMS-1349855 and DMS-1317602.

\section*{Appendix A: Proof of Proposition {\ref{prop_gRIP1}}}

\begin{proof}
For any index set $S\subset \{1,\ldots, n\}$ with $|S|\leq k$ and a vector $x\in \mathbf{R}^{|S|},$
since $\frac{1}{p}+\frac{1}{q}=1$, then $\ell_p$ and $\ell_q$ norms are dual to each other,
which implies that
\begin{equation}
\label{lplq}
  \|(\mathbf{I}_{|S|}-A_S^TA_S)x\|_q=\sup_{y\in \mathbf{R}^{|S|}\setminus \{0\}}\dfrac{\left|y^T(\mathbf{I}_{|S|}-A_S^TA_S)x\right|}{\|y\|_p}.
\end{equation}
By Definition {\ref{def_gRIP}}, then
\begin{eqnarray}
  \label{def2}
  \beta_{k,p,q} = \sup_{|S|\leq k}\ \sup_{x,y\in \mathbf{R}^{|S|}\setminus \{0\}}\dfrac{\left|y^T(\mathbf{I}_{|S|}-A_S^TA_S)x\right|}{\|x\|_p\|y\|_p}.
\end{eqnarray}
It is obvious that
\begin{align*}
\beta_{k,p,q}
&\geq \sup_{|S|\leq k}\ \sup_{x\in \mathbf{R}^{|S|}\setminus \{0\}}\dfrac{\left|x^T(\mathbf{I}_{|S|}-A_S^TA_S)x\right|}{\|x\|_p^2}\\\nonumber
&=\sup_{z\in \mathbf{R}^n\setminus \{0\}, \|z\|_0\leq k}\frac{\left|z^T(\mathbf{I}_{n}-A^TA)z\right|}{\|z\|_p^2},
\end{align*}
which implies the right-hand side of (\ref{prop_gRIP_Exp}).

On the other hand, by (\ref{def2}), we can also observe that
\begin{align}
  \label{def3}
  \beta_{k,p,q} = \sup_{|S|\leq k}\ \sup_{\|x\|_p,\|y\|_p\leq 1} |y^Tx-y^TA_S^TA_Sx|,
\end{align}
and for any $x,y\in \mathbf{R}^{|S|},$
\begin{align}
\nonumber
&|y^Tx-y^TA_S^TA_Sx|\\\nonumber
&=\Big|\frac{1}{2}(\|x\|_2^2+\|y\|_2^2-\|x-y\|_2^2)\\\nonumber
&-\frac{1}{2}(\|A_Sx\|_2^2+\|A_Sy\|_2^2-\|A_Sx-A_Sy\|_2^2)\Big|\\\nonumber
&\leq \frac{1}{2}\Big|\|x\|_2^2-\|A_Sx\|_2^2\Big|+\frac{1}{2}\Big|\|y\|_2^2-\|A_Sy\|_2^2\Big|\\
&+\frac{1}{2}\Big|\|x-y\|_2^2-\|A_S(x-y)\|_2^2\Big|.
\label{upperBound}
\end{align}
Furthermore,
it can be noted that
\begin{align}
&\sup_{u,v\in \mathbf{R}^{|S|},\|u\|_p,\|v\|_p \leq 1}\Big|\|u-v\|_2^2-\|A_S(u-v)\|_2^2\Big|\nonumber\\
&\leq 4\sup_{w\in \mathbf{R}^{|S|}, \|w\|_p\leq 1}\Big|\|w\|_2^2-\|A_Sw\|_2^2\Big|,
\label{upperBound2}
\end{align}
since $\|u-v\|_p \leq 2$ for $\|u\|_p \leq 1$ and $\|v\|_p\leq 1.$
Plugging (\ref{upperBound}) and (\ref{upperBound2}) into (\ref{def3}), it yields
\begin{align*}
\beta^A_{k,p,q}
& \leq 3\sup_{|S|\leq k}\ \sup_{\|x\|_p\leq 1}\Big|\|x\|_2^2-\|A_Sx\|_2^2\Big|\\\nonumber
&=3\sup_{\|z\|_0\leq k,\|z\|_p\leq1}\Big|\|z\|_2^2-\|Az\|_2^2\Big|\\\nonumber
&=3\sup_{z\in\mathbf{R}^n\setminus \{0\},\|z\|_0\leq k}\frac{\left|z^T(A^TA-\mathbf{I}_n)z\right|}{\|z\|_p^2},
\end{align*}
which implies the left-hand side of (\ref{prop_gRIP_Exp}).
Therefore, the proof of this proposition is completed.
\end{proof}

\section*{Appendix B: Proof of Proposition {\ref{Prop_gRIP_Coherence_RIP}}}

\begin{proof}
(i) The definition of gRIP induces $\beta_{k,1,\infty}\geq \beta_{2,1,\infty}$ for all $k\geq 2$.
Therefore, if we can claim the following two facts: (a) $\beta_{2,1,\infty} \geq \mu$, and (b) $\beta_{k,1,\infty} \leq \mu$ for all $k\geq 2,$
then Proposition \ref{Prop_gRIP_Coherence_RIP} (i) follows.

We first justify the fact (a).
Suppose the maximal element of $\mathbf{I}_{n}-A^TA$ in magnitude appears at the $i_0$-th row and the $j_0$-th column. Because for any $j$, the $j$-th diagonal elements of $\mathbf{I}_{n}-A^TA$ equals to $1-\|A_j\|^2=0$, we know $i_0\neq j_0$.
Without loss of generality, we assume that $i_0 <j_0.$
Let $A_{i_0}$ and $A_{j_0}$ be the $i_0$-th and $j_0$-th column vector of $A$, respectively,
then Definition {\ref{def_Coherence}} gives
$$\mu = |A_{i_0}^TA_{j_0}|.$$
Let $S=\{i_0,j_0\}$ and $\mathbf{e}=(0,1)^T$.
Then
\begin{eqnarray}\nonumber
  \beta_{2,1,\infty}&\geq&\|(\mathbf{I}_{2}-A_S^TA_S)\mathbf{e}\|_\infty \\
  &=&\|\mathbf{e}-A_S^TA_{j_0}\|_\infty\nonumber\\
  &=& \mu.
  \label{gRIP-Coherence1}
\end{eqnarray}

Then we prove the fact (b).
For any vector $x\in \mathbf{R}^k$ and
a subset $S \subset \{1,2,\ldots, n\}$ with $|S|=k$,
let $B=\mathbf{I}_{k}-A_S^TA_S$ and $z=Bx$.
Then
$$
|z_i| = |\sum_{j=1}^k B_{ij}x_j| \leq\sum_{j=1}^k |B_{ij}x_j| \leq \mu \|x\|_1,
$$
for any $i=1,\dots, k.$
It implies that
$$
\|Bx\|_{\infty} \leq \mu \|x\|_1.
$$
By the definition of $\beta_{k,1,\infty},$ it implies
\begin{equation}
\beta_{k,1,\infty}\leq \mu.
\label{gRIP-Coherence2}
\end{equation}
According to (\ref{gRIP-Coherence1}) and (\ref{gRIP-Coherence2}), for all $2\leq k \leq n,$ it holds
$$
\beta_{k,1,\infty}= \mu.
$$

(ii)
From the inequality (\ref{prop_gRIP_Exp}) and the equality (\ref{RIPEquivelent}), we know
\begin{equation}
\delta_k\leq \beta_{k,p,q}.
\label{Prop2.3}
\end{equation}
To prove
\begin{equation}
\delta_k\geq \beta_{k,p,q},
\label{Prop2.4}
\end{equation}
note that equality (\ref{def3}) leads to
\begin{equation}
\beta_{k,p,q} \leq {\sup_{S\subset \{1,\ldots,n\},|S|\leq k}} \|\mathbf{I}_{|S|}-A_S^TA_S\|_{2},
\label{Prop2.5}
\end{equation}
and further
\begin{eqnarray}
\label{Prop2.6}
&&{\sup_{S\subset \{1,\ldots,n\},|S|\leq k}} \|\mathbf{I}_{|S|}-A_S^TA_S\|_{2} \\\nonumber
&&= {\sup_{|S|\leq k}}\ {\sup_{x\in\mathbf{R}^{|S|}\setminus \{0\}}}\frac{|x^T(\mathbf{I}_{|S|}-A_S^TA_S)x|}{\|x\|_2^2}\\\nonumber
&&= {\sup_{z\in\mathbf{R}^{n}\setminus \{0\},\|z\|_0\leq k}}\frac{|z^T(\mathbf{I}_{n}-A^TA)z|}{\|z\|_2^2}\\\nonumber
&&=\delta_k,
\end{eqnarray}
where the last equality holds by the equivalent definition of RIP (this can be also referred to Definition 1 in \cite{FoucartRIP2010}).
From (\ref{Prop2.3})-(\ref{Prop2.6}), we can conclude that
\[
\delta_k = \beta_{k,p,q}.
\]
\end{proof}

\section*{Appendix C: Proof of Theorem \ref{Unique_Thm}}

\begin{proof}
We prove this theorem by contradiction. Assume $x^{**}$ satisfies $Ax^{**}=b$ and $\|x^{**}\|_0 \leq k$.
Then
\[
A(x^{*}-x^{**}) = 0,
\]
which implies
\[
(\mathbf{I}_{n}-{A}^TA)(x^{*}-x^{**}) = x^{*}-x^{**}.
\]
Let $x=x^{*}-x^{**}$, $S$ be the support of $x$ and $x_S$ be a subvector of $x$ with the components restricted to $S$.
It follows
\[
(\mathbf{I}_{|S|}-{A_S}^TA_S)x_S = x_S,
\]
and further
\begin{equation}
\|(\mathbf{I}_{|S|}-{A_S}^TA_S)x_S\|_q = \|x_S\|_q,
\label{Lemma1.1}
\end{equation}
for any $q\in [1,\infty].$
Since $\|x^{*}\|_0 \leq k$ and $\|x^{**}\|_0 \leq k$,
then $|S| \leq 2k.$
For any $p\in [1,\infty),$ and by the definition of gRIP, we have
\[
\|(\mathbf{I}_{|S|}-{A_S}^TA_S)x_S\|_q \leq \beta_{2k,p,q} \|x_S\|_p.
\]
By Lemma {\ref{Norm_Equiv_Lemma}},
there holds
\[
\|x_S\|_p \leq (2k)^{\max\{\frac{1}{p}-\frac{1}{q},0\}}\|x_S\|_q.
\]
By the assumption of this theorem, then
\begin{align*}
\|(\mathbf{I}_{|S|}-{A_S}^TA_S)x_S\|_q
& \leq \beta_{2k,p,q} (2k)^{\max\{\frac{1}{p}-\frac{1}{q},0\}}\|x_S\|_q \nonumber\\
& < \|x_S\|_q,
\end{align*}
which contradicts with (\ref{Lemma1.1}).
Therefore, $x^{*}$ is the unique sparsest solution.
\end{proof}

\section*{Appendix D: Proof of Theorem \ref{conv_gRIP}}

Before justifying the convergence of the AIT algorithm based on gRIP,
we first introduce two lemmas.

\begin{lemma}
For any  $x, y \in \mathbf{R}^n$, and $p \in [1,\infty),$
then
\begin{equation}
\|x+y\|_p^p \leq 2^{p-1} (\|x\|_p^p+\|y\|_p^p).
\label{p_p_norm1}
\end{equation}
Moreover, if $x_i\cdot y_i \geq 0$ for $i=1,\ldots, n,$ then
\begin{equation}
\|x+y\|_p^p \geq \|x\|_p^p + \|y\|_p^p.
\label{p_p_norm2}
\end{equation}
\label{lp_norm}
\end{lemma}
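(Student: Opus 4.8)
The plan is to prove the two inequalities of Lemma \ref{lp_norm} separately, each by reducing to a single-coordinate (scalar) statement and then summing.

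For \eqref{p_p_norm1}, I would first establish the scalar inequality $|a+b|^p \leq 2^{p-1}(|a|^p+|b|^p)$ for all real $a,b$ and $p \in [1,\infty)$. This follows from the convexity of the map $t \mapsto t^p$ on $[0,\infty)$: writing $\frac{|a+b|}{2} \leq \frac{|a|+|b|}{2}$ and applying Jensen's inequality (or equivalently the midpoint convexity $\left(\frac{s+t}{2}\right)^p \leq \frac{s^p+t^p}{2}$ with $s=|a|$, $t=|b|$) gives $\left(\frac{|a+b|}{2}\right)^p \leq \frac{|a|^p+|b|^p}{2}$, and multiplying through by $2^p$ yields the claim. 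Then I would apply this coordinatewise with $a = x_i$, $b = y_i$ and sum over $i = 1,\ldots,n$:
\[
\|x+y\|_p^p = \sum_{i=1}^n |x_i+y_i|^p \leq 2^{p-1}\sum_{i=1}^n (|x_i|^p + |y_i|^p) = 2^{p-1}(\|x\|_p^p + \|y\|_p^p).
\]

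For \eqref{p_p_norm2}, the extra hypothesis $x_i y_i \geq 0$ means that for each coordinate, $|x_i + y_i| = |x_i| + |y_i|$. So it suffices to show the scalar inequality $(s+t)^p \geq s^p + t^p$ for $s,t \geq 0$ and $p \geq 1$. This is the superadditivity of $t \mapsto t^p$ on $[0,\infty)$ for $p \geq 1$; one quick way is to assume without loss of generality $s+t > 0$, divide by $(s+t)^p$, set $\lambda = s/(s+t) \in [0,1]$, and observe that $\lambda^p + (1-\lambda)^p \leq \lambda + (1-\lambda) = 1$ since $p \geq 1$ forces $\lambda^p \leq \lambda$ and $(1-\lambda)^p \leq 1-\lambda$ on $[0,1]$. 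Summing the coordinatewise inequality $|x_i+y_i|^p = (|x_i|+|y_i|)^p \geq |x_i|^p + |y_i|^p$ over $i$ gives \eqref{p_p_norm2}.

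I do not anticipate a genuine obstacle here; the only mild care needed is the handling of the degenerate cases ($a+b=0$ or $s+t=0$), which are trivial, and making sure the direction of the elementary inequality $t^p \le t$ versus $t^p \ge t$ on $[0,1]$ is applied correctly (it goes the right way precisely because $p\ge 1$). Both statements are classical consequences of the convexity, respectively superadditivity, of the power function, and the reduction from the vector inequality to the scalar one is immediate by term-by-term summation.
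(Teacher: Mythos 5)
Your proposal is correct and follows essentially the same route as the paper: both reduce each inequality to a scalar statement (convexity/Jensen for \eqref{p_p_norm1}, and $|x_i+y_i|=|x_i|+|y_i|$ plus superadditivity of $t\mapsto t^p$ for \eqref{p_p_norm2}) and then sum coordinatewise. The only cosmetic difference is that you justify $(s+t)^p\ge s^p+t^p$ by normalizing with $\lambda=s/(s+t)$, whereas the paper invokes the equivalent fact $\sqrt[p]{s^p+t^p}\le s+t$.
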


The proof of Lemma {\ref{lp_norm}} is obvious since $f(z) = z^p$ is convex for $p\geq 1$ and any $z\geq 0$,
and $\|x\|_p \leq \|x\|_1$ for any $x\in \mathbf{R}^n$.
We will omit it due to the limitation of the length of the paper.

%
%
%

\begin{lemma}
For any $t\geq 1$ and $q\in [1,\infty]$, if $k\geq k^*$, the following inequality holds for the AIT algorithm:
\begin{equation}
\tau^{(t)}\leq (\sum_{i\in I^t_+}|z^{(t)}_i-x^*_i|^{q})^{1/q} = \|z^{(t)}_{I^t_+}-x^*_{I^t_+}\|_q,
\label{tau_bound}
\end{equation}
where $I^t_+$ is the index set of the largest $k+1$ components of $z^{(t)}$ in magnitude.
\label{tau}
\end{lemma}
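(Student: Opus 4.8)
The plan is to exploit the defining feature of the threshold: $\tau^{(t)} = |z^{(t)}_{[k+1]}|$ is the $(k+1)$-th largest magnitude among the components of $z^{(t)}$. Since there are exactly $k+1$ indices in $I^t_+$ and $\tau^{(t)}$ equals the smallest of these $k+1$ largest magnitudes, every index $i\in I^t_+$ satisfies $|z^{(t)}_i|\geq \tau^{(t)}$. First I would count support: because $\|x^*\|_0 = k^* \leq k < k+1 = |I^t_+|$, the set $I^t_+$ cannot be contained in $I^* = \mathrm{supp}(x^*)$, so there must exist at least one index $j \in I^t_+$ with $x^*_j = 0$. For that index we get $|z^{(t)}_j - x^*_j| = |z^{(t)}_j| \geq \tau^{(t)}$.

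Second, from this single index $j$ I would bound $\tau^{(t)} \leq |z^{(t)}_j - x^*_j| \leq \bigl(\sum_{i\in I^t_+} |z^{(t)}_i - x^*_i|^q\bigr)^{1/q}$ for $q\in[1,\infty)$, since adding more nonnegative terms to the sum only increases it, and the $q$-th root is monotone. The case $q=\infty$ is handled the same way with the max: $\tau^{(t)} \leq |z^{(t)}_j - x^*_j| \leq \max_{i\in I^t_+}|z^{(t)}_i - x^*_i| = \|z^{(t)}_{I^t_+} - x^*_{I^t_+}\|_\infty$. This gives exactly the claimed inequality $\tau^{(t)} \leq \|z^{(t)}_{I^t_+} - x^*_{I^t_+}\|_q$.

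The only subtlety — and the one point that needs a clean argument rather than a one-liner — is the pigeonhole step establishing the existence of $j \in I^t_+$ with $x^*_j = 0$. One must be careful that $I^t_+$ is a set of $k+1$ \emph{distinct} indices (ties in magnitude are broken by some fixed rule so that $|I^t_+| = k+1$ genuinely), and then $|I^t_+| = k+1 > k \geq k^* = |I^*|$ forces $I^t_+ \not\subseteq I^*$, hence $I^t_+ \setminus I^* \neq \emptyset$. Picking any $j$ in that difference finishes the setup. I do not anticipate any real obstacle beyond stating this carefully; the rest is monotonicity of norms over index sets.

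In summary: (1) note $|I^t_+| = k+1 > k^* = |I^*|$ and conclude $\exists\, j\in I^t_+$ with $x^*_j = 0$; (2) observe $|z^{(t)}_j| \geq \tau^{(t)}$ by definition of the $(k+1)$-th largest magnitude; (3) combine to get $\tau^{(t)} \leq |z^{(t)}_j - x^*_j|$ and enlarge to the full sum over $I^t_+$ under the $\ell_q$ norm, treating $q=\infty$ separately via the maximum. This yields (\ref{tau_bound}).
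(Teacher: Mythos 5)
Your proof is correct and rests on the same core idea as the paper's: exhibit an index in $I^t_+$ lying outside $\mathrm{supp}(x^*)$, at which $|z^{(t)}_i-x^*_i|=|z^{(t)}_i|\geq\tau^{(t)}$, and then dominate this single term by the $\ell_q$ norm over $I^t_+$. Your single pigeonhole step on $|I^t_+|=k+1>k^*$ merely unifies the paper's two-case analysis ($I^*=I^t$ versus $I^*\neq I^t$) and its reduction to $q=\infty$, so it is essentially the same argument in a slightly streamlined form.
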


\begin{proof}
When $q=\infty$, we need to show
\begin{equation}
\tau^{(t)}\leq \max_{i\in I^t_+} |z^{(t)}_i-x^*_i|,
\label{tau_infty}
\end{equation}
then Lemma \ref{Norm_Equiv_Lemma} shows that  (\ref{tau_bound}) holds for all $q \in [1,\infty]$.

Let $I^t$ be the index set of the largest $k$ components of $z^{(t)}$ in magnitude, then $I^t_+ = I^t \cup \{[k+1]\}$,
where  $[k+1]$ represents the index of the $(k+1)$-th largest component of $z^{(t)}$ in magnitude.
We will prove (\ref{tau_infty}) in the following two cases.

Case (i). If $I^*= I^t$, then
\begin{align}
\tau^{(t)}
&= |z_{[k+1]}^{(t)}| = |z_{[k+1]}^{(t)} - x_{[k+1]}^*| \nonumber\\
&\leq \max_{i\in I^t_+} |z^{(t)}_i-x^*_i|.
\label{Prop3_case1}
\end{align}

Case (ii). If $I^* \neq I^t$, then there exists $i_0 \in I^t$ such that
$$
i_0 \notin I^*.
$$
Otherwise $I^t\subset I^*$ and $I^t\neq I^*$ which contradicts with $|I^t|\geq k^*$ and $|I^*|=k^*$.
Thus, $x^*_{i_0} = 0$ and
\begin{align}
\tau^{(t)}
&= |z_{[k+1]}^{(t)}| \leq |z_{i_0}^{(t)}| = |z_{i_0}^{(t)} - x^*_{i_0}| \nonumber\\
&\leq \max_{i\in I^t_+} |z^{(t)}_i-x^*_i|.
\label{Prop3_case2}
\end{align}
Combining (\ref{Prop3_case1}) and (\ref{Prop3_case2}) gives (\ref{tau_infty}).
\end{proof}

\begin{proof}[Proof of Theorem \ref{conv_gRIP}]
In order to prove this theorem, we only need to justify the following two inequalities, i.e.,
for any $t\in \mathbf{N}$,
\begin{align}
\|z_{S^t}^{(t+1)}-x_{S^t}^*\|_q
\leq \gamma_s \|x^{(t)}-x^*\|_p + s\|A^T\epsilon\|_q,
\label{Thm4.2}
\end{align}
and for any $t\geq 1$,
\begin{align}
\|x^{(t)}-x^*\|_{p}
\leq L\|z_{S^{t-1}}^{(t)}-x^*_{S^{t-1}}\|_{q}.
\label{Thm4_Lf}
\end{align}
Then combining (\ref{Thm4.2}) and (\ref{Thm4_Lf}), it holds
\begin{align*}
&\|x^{(t+1)}-x^*\|_{p}
\leq L\|z_{S^{t}}^{(t+1)}-x^*_{S^{t}}\|_{q} \nonumber\\
&\leq \rho_s \|x^{(t)}-x^*\|_p + sL\|A^T\epsilon\|_q.
\end{align*}
Since $0<\rho_s<1$ under the assumption of this theorem, then by induction for any $t\geq 1,$ we have
\begin{equation*}
\|x^{(t)}-x^*\|_{p} \leq (\rho_s)^t\|x^*-x^{(0)}\|_p + \frac{sL}{1-\rho_s}\|A^T\epsilon\|_q.
\end{equation*}

First, we turn to prove the inequality (\ref{Thm4.2}).
By the \textbf{\emph{Step 1}} of Algorithm 1, for any $t\in \mathbf{N}$,
$$
z^{(t+1)}=x^{(t)}-sA^{T}(Ax^{(t)}-b),
$$
and we note that $b = Ax^*+\epsilon$,
then
\begin{align*}
&z^{(t+1)} - x^* = (\mathbf{I}_{n}-sA^{T}A)(x^{(t)}-x^*) + sA^T \epsilon\nonumber\\
& = (1-s)(x^{(t)}-x^*) + s(\mathbf{I}_{n}-A^{T}A)(x^{(t)}-x^*) + sA^T \epsilon.
\end{align*}
For any $t\in \mathbf{N}$ and $q\in [1,\infty],$ let $S^t=I^{t+1}_+\bigcup I^t\bigcup I^*$.
Noting that $I^t, I^*\subset S^t$, it follows
\[
A(x^{(t)}-x^*)=A_{S^t}(x^{(t)}_{S^t}-x^*_{S^t}).
\]
Then we have
\begin{eqnarray*}
&z^{(t+1)}_{S^t} - x^*_{S^t} = (1-s)(x^{(t)}_{S^t}-x^*_{S^t}) \nonumber\\
& + s(\mathbf{I}_{|S^t|}-A^{T}_{S^t}A_{S^t})(x^{(t)}_{S^t}-x^*_{S^t}) + sA^T_{S^t} \epsilon.
\end{eqnarray*}
Therefore,
\begin{align}
& \|z^{(t+1)}_{S^t}-x^*_{S^t}\|_q \leq |1-s| \cdot\|x^{(t)}_{S^t}-x^*_{S^t}\|_q \nonumber\\
&  + s \|(\mathbf{I}_{|S^t|}-A_{S^t}^{T}A_{S^t})(x_{S^t}^{(t)}-x_{S^t}^*)\|_q + s\|A_{S^t}^T\epsilon\|_q.
\label{diff_z(t+1)_x*}
\end{align}
Since $\|x^{(t)}\|_0 \leq k = k^*$ and $\|x^*\|_0 = k^*$
then
$$
|I^t| \leq k^*, |I^{t+1}_+|\leq k^*+1, |I^*| = k^*,
$$
and hence $|S^t| \leq 3k^*+1$.
For any $p\in [1,\infty)$, by (\ref{Norm_Equiv_Extension}) and the definition of gRIP (\ref{Def_gRIP}), it holds
\begin{equation}
\|x^{(t)}-x^*\|_q \leq (2k^*)^{\max\{\frac{1}{q}-\frac{1}{p},0\}}\|x^{(t)}-x^*\|_p,
\label{diff_x(t)_x*1}
\end{equation}
and
\begin{eqnarray}
&\|(\mathbf{I}_{|S^t|}-A_{S^t}^{T}A_{S^t})(x_{S^t}^{(t)}-x_{S^t}^*)\|_q \nonumber\\
&\leq \beta_{3k^*+1,p,q} \|x_{S^t}^{(t)}-x_{S^t}^*\|_p
= \beta_{3k^*+1,p,q} \|x^{(t)}-x^*\|_p.
\label{Thm4.1}
\end{eqnarray}
Plugging (\ref{diff_x(t)_x*1}) and (\ref{Thm4.1}) into (\ref{diff_z(t+1)_x*}), then
\begin{align*}
&\|z_{S^t}^{(t+1)}-x_{S^t}^*\|_q \nonumber\\
&\leq \left(|1-s|(2k^*)^{\max\{\frac{1}{q}-\frac{1}{p},0\}} + s \beta_{3k^*+1,p,q}\right)\|x^{(t)}-x^*\|_p \nonumber\\
& + s\|A_{S^t}^T\epsilon\|_q \nonumber\\
&\leq \gamma_s \|x^{(t)}-x^*\|_p + s\|A^T\epsilon\|_q.
\end{align*}
Thus, we have obtained the inequality ({\ref{Thm4.2}}).

Then we turn to the proof of (\ref{Thm4_Lf}).
We will prove it in two steps.

{\bf Step a):} For any $p\in [1,\infty)$,
\begin{equation}
\|x^{(t)}-x^*\|^p_{p}
=\|x^{(t)}_{I^*}-x^*_{I^*}\|^p_{p}+\|x^{(t)}_{I^t \setminus {I^*}}\|^p_{p}.
\label{Thm4.2.1}
\end{equation}
By Lemma \ref{lp_norm},
\begin{align}
&\|x^{(t)}_{I^*}-x^*_{I^*}\|^p_{p} = \|x^{(t)}_{I^*}-z^{(t)}_{I^*}+ z^{(t)}_{I^*}- x^*_{I^*}\|^p_{p} \nonumber\\
& \leq 2^{p-1}\|z^{(t)}_{I^*}-x^{(t)}_{I^*}\|^p_{p}+2^{p-1}\|z^{(t)}_{I^*}-x^*_{I^*}\|^p_{p}.
\label{Thm4.2.2}
\end{align}
Moreover, by the \textbf{\emph{Step 3}} of Algorithm 1 and Assumption 1,
for any $i\in I^t$:
$$
sgn(x^{(t)}_i) = sgn(z^{(t)}_i) \ \text{and}\ |x^{(t)}_i| \leq |z^{(t)}_i|.
$$
Thus, for any $i\in I^t \setminus I^*$, it holds
\begin{equation}
x^{(t)}_i \cdot (z^{(t)}_i - x^{(t)}_i) \geq 0.
\label{Thm4.2.4}
\end{equation}
With (\ref{Thm4.2.4}) and by Lemma \ref{lp_norm}, we have
\begin{eqnarray}
\|z^{(t)}_{I^{t} \setminus I^{*}}\|^p_{p} &=& \|x^{(t)}_{I^{t} \setminus I^{*}} + (z^{(t)}_{I^{t} \setminus I^{*}}-x^{(t)}_{I^{t} \setminus I^{*}})\|^p_{p}.\nonumber\\
&\geq& \|x^{(t)}_{I^t \setminus I^*}\|^p_{p} + \|z^{(t)}_{I^t \setminus I^*}-x^{(t)}_{I^t \setminus I^*}\|^p_{p}.
\label{Thm4.2.5}
\end{eqnarray}
Plugging (\ref{Thm4.2.2}) and (\ref{Thm4.2.5}) into (\ref{Thm4.2.1}), it becomes
\begin{align}
\|x^{(t)}-x^*\|^p_{p}
&\leq 2^{p-1}(\|z^{(t)}_{I^*}-x^{(t)}_{I^*}\|^p_{p}+\|z^{(t)}_{I^*}-x^*_{I^*}\|^p_{p}) \nonumber\\
&+\|z^{(t)}_{I^{t} \setminus I^*}\|^p_{p}-\|z^{(t)}_{I^{t} \setminus I^*}-x^{(t)}_{I^{t} \setminus I^*}\|^p_{p}.
\label{Thm4.3}
\end{align}
Furthermore, by the \textbf{\emph{Step 2}} of Algorithm 1, Assumption 1 and Lemma {\ref{tau}},
for any $t\geq 1$,
we have:
\begin{enumerate}
\item[(a)]
if $i\in I^t$, $c_2\tau^{(t)}\leq |z^{(t)}_i-x^{(t)}_i|\leq c_1\tau^{(t)} \leq \tau^{(t)}$;

\item[(b)]
if $i\not\in I^t$, $|z^{(t)}_i-x^{(t)}_i|=|z^{(t)}_{i}|\leq \tau^{(t)}$;

\item[(c)]
$\tau^{(t)} \leq \|z^{(t)}_{I^t_+}-x^*_{I^t_+}\|_{q}$.
\end{enumerate}
By the above facts (a)-(c), it holds
\begin{align}
\|z^{(t)}_{I^*}-x^{(t)}_{I^*}\|^p_{p}
&\leq k^* \max_{i\in I^*} |z_i^{(t)} - x_i^{(t)}|^p \leq k^* |\tau^{(t)}|^p,
\label{ztI-xtI}
\end{align}
and
\begin{align}
\|z^{(t)}_{I^t \setminus I^*}-x^{(t)}_{I^t \setminus I^*}\|^p_{p}
&\geq |I^t \setminus I^*| \min_{i\in I^t \setminus I^*} |z^{(t)}_i-x^{(t)}_i|^p \nonumber\\
&\geq  |I^t \setminus I^*|(c_2)^p|\tau^{(t)}|^p,
\label{II}
\end{align}
where $|I^t \setminus I^*|$ represents the cardinality of the index set $I^t \setminus I^*.$
Plugging (\ref{ztI-xtI}), (\ref{II}) into (\ref{Thm4.3}), it follows
\begin{align}
\|x^{(t)}-x^*\|^p_{p}
&\leq 2^{p-1}\|z^{(t)}_{I^*}-x^*_{I^*}\|^p_{p}+\|z^{(t)}_{I^t \setminus I^*}\|^p_{p}\nonumber\\
&+(2^{p-1}k^*-(c_2)^p|I^t \setminus I^*|)|\tau^{(t)}|^p.
\label{Thm4.4}
\end{align}
Furthermore, we note that
\begin{align*}
&\|z^{(t)}_{I^t \setminus I^*}\|_{p}^p = \|z^{(t)}_{I^t \setminus I^*} - x^*_{I^t \setminus I^*}\|_{p}^p \\
&\leq |I^t \setminus I^*| \max_{i\in I^t \setminus I^*} |z_i^{(t)} - x_i^*|^p \\
&= |I^t \setminus I^*| \cdot \|z^{(t)}_{I^t \setminus I^*}-x^*_{I^t \setminus I^*}\|^p_\infty\\
&\leq |I^t \setminus I^*| \cdot \|z^{(t)}_{I^t \setminus I^*}-x^*_{I^t \setminus I^*}\|^p_q,
\end{align*}
where the first equality holds because $x^*_{I^t \setminus I^*} =0$,
and the second inequality holds because of Lemma \ref{Norm_Equiv_Lemma}.
Therefore, (\ref{Thm4.4}) becomes
\begin{align}
&\|x^{(t)}-x^*\|^p_{p}
\nonumber\\
&\leq 2^{p-1}\|z^{(t)}_{I^*}-x^*_{I^*}\|^p_{p}+|I^t \setminus I^*| \cdot \|z^{(t)}_{I^t \setminus I^*}-x^*_{I^t \setminus I^*}\|^p_q\nonumber\\
& +\left(2^{p-1}k^*-(c_2)^p|I^t \setminus I^*|\right)|\tau^{(t)}|^p\nonumber\\
&\leq 2^{p-1}\|z^{(t)}_{I^*}-x^*_{I^*}\|^p_{p} \nonumber\\
&+(2^{p-1}k^*-(c_2)^p|I^t \setminus I^*|+|I^t \setminus I^*|) \|z^{(t)}_{I^t_+}-x^*_{I^t_+}\|^p_q\nonumber\\
&\leq 2^{p-1}(k^*)^{\max\{1-\frac{p}{q},0\}} \|z^{(t)}_{I^{*}}-x^*_{I^{*}}\|^p_{q} \nonumber\\
&+(2^{p-1}-(c_2)^p+1)k^*  \|z^{(t)}_{I^{t}_+}-x^*_{I^{t}_+}\|^p_q\nonumber\\
& \leq L_1 \|z^{(t)}_{S^{t-1}}-x^*_{S^{t-1}}\|^p_q,
\label{Thm4_L1.1}
\end{align}
where the second inequality holds by the fact (c), i.e.,
$\tau^{(t)} \leq \|z^{(t)}_{I^t_+}-x^*_{I^t_+}\|_q$, the third inequality holds by Lemma \ref{Norm_Equiv_Lemma} and  $|I^t \setminus I^*|\leq k^*$ and the last inequality holds because $S^{t-1}=I^t_+\cup I^{t-1}\cup I^*$.
Thus, it implies
\begin{equation}
\|x^{(t)}-x^*\|_{p}\leq \sqrt[p]{L_1}\|z^{(t)}_{S^{t-1}}-x^*_{S^{t-1}}\|_{q}.
\label{Thm4_L1}
\end{equation}

{\bf Step b):} By Lemma {\ref{lp_norm}},
\begin{align}
&\|x^{(t)}-x^*\|^p_p=\|x^{(t)}_{I^t}-x^*_{I^t}\|^p_{p}+\|x^*_{I^* \setminus I^t}\|^p_{p} \nonumber\\
&\leq 2^{p-1}\|z^{(t)}_{I^t}-x^*_{I^t}\|^p_{p}+2^{p-1}\|z^{(t)}_{I^t}-x^{(t)}_{I^t}\|^p_{p} \nonumber\\
&+2^{p-1}\|z^{(t)}_{I^* \setminus I^t}-x^*_{I^* \setminus I^t}\|^p_{p}+2^{p-1} \|z^{(t)}_{I^* \setminus I^t}\|_p^p \nonumber\\
& = 2^{p-1}\|z^{(t)}_{I^t \cup I^*}-x^*_{I^t \cup I^*}\|^p_{p}\nonumber\\
&+ 2^{p-1}(\|z^{(t)}_{I^t}-x^{(t)}_{I^t}\|^p_{p} + \|z^{(t)}_{I^* \setminus I^t}\|_p^p).
\label{xt-x*_p-norm}
\end{align}
Moreover, by Lemma {\ref{Norm_Equiv_Lemma}}, it holds
\begin{align}
&\|z^{(t)}_{I^t \cup I^*}-x^*_{I^t \cup I^*}\|^p_{p} \nonumber\\
&\leq (|I^t \cup I^*|)^{\max\{1-\frac{p}{q},0\}} \|z^{(t)}_{I^t \cup I^*}-x^*_{I^t \cup I^*}\|_q^p \nonumber\\
& \leq (2k^*)^{\max \{1-\frac{p}{q},0\}} \|z^{(t)}_{I^t \cup I^*}-x^*_{I^t \cup I^*}\|_q^p,
\label{zt-x*}
\end{align}
where the last inequality holds for $|I^t \cup I^*| \leq 2k^*$.
We also have
\begin{align}
&\|z^{(t)}_{I^t}- x^{(t)}_{I^t}\|^p_{p} \leq k^* \max_{i\in I^t} |z_i^{(t)} - x_i^{(t)}|^p \nonumber\\
&\leq k^* (c_1\tau^{(t)})^p \leq k^* (c_1)^p \|z^{(t)}_{I^t_+}-x^*_{I^t_+}\|_q^p.
\label{zt_I-xt_I_p-norm}
\end{align}
Since $|I^t| = |I^*| = k^*$, then
$$
|I^* \setminus I^t| = |I^t \setminus I^*|.
$$
Thus, it holds
\begin{align}
\|z^{(t)}_{I^* \setminus I^t}\|_p^p
&\leq |I^* \setminus I^t| \max_{i\in I^* \setminus I^t} |z_i^{(t)}|^p \leq |I^* \setminus I^t| \cdot |\tau^{(t)}|^p\nonumber\\
& = |I^t \setminus I^*|\cdot |\tau^{(t)}|^p \leq |I^t \setminus I^*| \min_{i\in I^t \setminus I^*} |z_i^{(t)}|^p \nonumber\\
&\leq \|z^{(t)}_{I^t \setminus I^*}\|_p^p = \|z^{(t)}_{I^t \setminus I^*}-x^*_{I^t \setminus I^*}\|_p^p \nonumber\\
& \leq (k^*)^{\max\{1-\frac{p}{q},0\}} \|z^{(t)}_{I^t \setminus I^*}-x^*_{I^t \setminus I^*}\|_q^p.
\label{zt_I*-It}
\end{align}
Plugging (\ref{zt-x*}), (\ref{zt_I-xt_I_p-norm}) and (\ref{zt_I*-It}) into (\ref{xt-x*_p-norm}),
and further since $S^{t-1}=I^t_+\cup I^{t-1}\cup I^*,$ and thus
$I^t_+ \subset S^{t-1}, I^t \subset I^t_+ \subset S^{t-1}, I^t\cup I^*\subset S^{t-1}, I^t \setminus I^*\subset S^{t-1},$
it becomes
\begin{align}
&\|x^{(t)}-x^*\|^p_p \nonumber\\
&\leq (2^{p}(2k^*)^{\max\{1-\frac{p}{q},0\}} + 2^{p-1}(c_1)^pk^*) \|z^{(t)}_{S^{t-1}}-x^*_{S^{t-1}}\|_q^p \nonumber\\
&= L_2 \|z^{(t)}_{S^{t-1}}-x^*_{S^{t-1}}\|_q^p.
\label{Otherhand}
\end{align}
Thus, we have
\begin{equation}
\|x^{(t)}-x^*\|_{p}\leq \sqrt[p]{L_2}\|z^{(t)}-x^*\|_{q}.
\label{Thm4_L2}
\end{equation}

From (\ref{Thm4_L1}) and (\ref{Thm4_L2}), for any $t\geq 1$, it holds
\begin{align}
\|x^{(t)}-x^*\|_{p}
&\leq \min \{\sqrt[p]{L_1},\sqrt[p]{L_2}\}\|z^{(t)}-x^*\|_{q} \nonumber\\
&= L\|z^{(t)}-x^*\|_{q}\nonumber\\
&= L\|z_{S^{t-1}}^{(t)}-x^*_{S^{t-1}}\|_{q},
\label{Thm4_L}
\end{align}
where the last equality holds for $S^{t-1}=I^t_+\cup I^{t-1}\cup I^*$.
Thus, we have obtained (\ref{Thm4_Lf}).

Therefore, we end the proof of this theorem.
\end{proof}

\section*{Appendix E: Proof of Theorem \ref{conv_Coherence}}

\begin{proof}
The proof is similar to that of Theorem {\ref{conv_gRIP}}.
According to the proof of Theorem {\ref{conv_gRIP}},
we have known that (\ref{diff_z(t+1)_x*})-(\ref{Thm4.1}) hold for all pairs of $(p,q)$ with $\frac{1}{p}+\frac{1}{q}=1,$
and thus obviously hold for $p=1$ and $q=\infty.$
In the following, instead of the inequality (\ref{Thm4.2}), we will derive a tighter upper bound of $\|z_{S^t}^{(t+1)}-x_{S^t}^*\|_{\infty},$ that is,
\begin{align}
&\|z_{S^t}^{(t+1)}-x_{S^t}^*\|_{\infty} \nonumber\\
&\leq
\max\{\mu s, |1-s|\}\|x^{(t)}-x^*\|_1
+s\|A^T\epsilon\|_\infty.
\label{Tigher_UpperBound}
\end{align}

Now we turn to prove the inequality (\ref{Tigher_UpperBound}).
According to (\ref{AIT1}), it can be observed that
\begin{align*}
&\|z^{(t+1)}-x^*\|_{\infty}\leq \\
&\|\left((1-s)\mathbf{I}_{n}+s(\mathbf{I}_{n}-A^TA)\right)(x^{(t)}-x^*)\|_\infty + s\|A^T\epsilon\|_\infty.
\end{align*}
Let $B = (1-s)\mathbf{I}_{n}+s(\mathbf{I}_{n}-A^TA)$ and $B_{ij}$ be the $(i, j)$-th element of $B$.
Since $\|A_j\|_2 = 1$ for all $j=1,\ldots, n,$ then
$$B_{ii} = 1-s,$$
for all $i=1,\ldots, n$.
Moreover, by the definition of the coherence $\mu$, the absolutes of all the off-diagonal elements of $\mathbf{I}_{n}-A^TA$ are no bigger than $\mu.$
Thus,
$$|B_{ij}| \leq s\mu,$$
for any $i\neq j.$
As a consequence, it holds
$$
\max_{i,j\in \{1,\dots, n\}} |B_{ij}| \leq \max \{|1-s|,s\mu\} = \gamma_s.
$$
Furthermore, for any $i=1,\ldots, n,$
\begin{align*}\nonumber
\left|z^{(t+1)}_i-x^*_i\right|
& = \left|\sum_{j=1}^n B_{ij}(x^{(t)}_j-x^*_j) + s A_i^T \epsilon\right|\\
&\leq \left|\sum_{j=1}^n B_{ij}(x^{(t)}_j-x^*_j)\right|+s\|A^T\epsilon\|_\infty\\
&\leq \gamma_s\|x^{(t)}-x^*\|_1+s\|A^T\epsilon\|_\infty .\nonumber
\end{align*}
This implies
\begin{align*}
\|z_{S^t}^{(t+1)}-x_{S^t}^*\|_{\infty}
\leq \gamma_s\|x^{(t)}-x^*\|_1+s\|A^T\epsilon\|_\infty .
\end{align*}
Therefore, we obtain the (\ref{Tigher_UpperBound}).
According to the proof of Theorem \ref{conv_gRIP}, we have that the inequality (\ref{Thm4_Lf}) still holds when $p=1$ and $q=\infty,$ that is,
\begin{align}
\|x^{(t)}-x^*\|_{1} \leq L\|z_{S^{t-1}}^{(t)}-x^*_{S^{t-1}}\|_{\infty}.
\label{coherence1}
\end{align}
Similar to the rest of the proof of Theorem {\ref{conv_gRIP}},
combining (\ref{Tigher_UpperBound}) and (\ref{coherence1}),
we can conclude the proof of this theorem.
\end{proof}

\section*{Appendix F: Proof of Theorem \ref{Thm_RIP_Hard}}

\begin{proof}
The proof of this theorem is also very similar to that of Theorem {\ref{conv_gRIP}}.
According to the proof of Theorem {\ref{conv_gRIP}},
we have known that (\ref{Thm4.2}) holds for all pairs of $(p,q)$ with $\frac{1}{p}+\frac{1}{q}=1,$
and thus obviously holds for $p=2$ and $q=2,$ that is,
\begin{align}
\|z_{S^t}^{(t+1)}-x_{S^t}^*\|_2
\leq \delta_{3k^*+1}\|x^{(t)}-x^*\|_2 + \|A^T\epsilon\|_2,
\label{Tigher_UpperBound_RIP}
\end{align}
where $S^{t} = I^{t+1}_+\cup I^{t}\cup I^*$, $I^{t+1}_+$ is the index set of the largest $k+1$ components of $z^{(t+1)},$
$I^t$ and $I^*$ represent the support sets of $x^{(t)}$ and $x^*,$ respectively.
In the following, instead of the inequality (\ref{Thm4_Lf}), we will derive a tighter upper bound of $\|x^{(t)}-x^*\|_2,$ that is,
\begin{align}
\|x^{(t)}-x^*\|_2\leq \frac{\sqrt{5}+1}{2}\|z_{S^{t-1}}^{(t)}-x_{S^{t-1}}^*\|_{2}.
\label{UpperBound_Diff_RIP}
\end{align}

Now we turn to prove the inequality (\ref{UpperBound_Diff_RIP}).
It can be noted that
\begin{equation}
\|x^{(t)}-x^*\|^2_2
=\|x^{(t)}_{I^{t}}-x^*_{I^t}\|^2_{2}+\|x^{(t)}_{I^* \setminus I^t}-x^*_{I^* \setminus I^t}\|^2_{2}.
\label{hardEq1}
\end{equation}
On one hand, since $x^{(t)}_i=z^{(t)}_i$ for any $i\in I^t$,
then
\begin{equation}
\|x^{(t)}_{I^t}-x^*_{I^t}\|^2_{2}= \|z^{(t)}_{I^t}-x^*_{I^t}\|^2_{2}.
\label{hardEq1.1}
\end{equation}
On the other hand, we can also observe that $x^{(t)}_i=0$ for any $i\in I^* \setminus I^t$,
and thus
\begin{align}
\nonumber
&\|x^{(t)}_{I^*\setminus I^t}-x^*_{I^* \setminus I^t}\|^2_{2}=\|x^*_{I^* \setminus I^t}\|^2_{2} =\sum_{i\in I^* \setminus I^t}(x^*_i-z^{(t)}_i+z^{(t)}_i)^2\nonumber\\
&\leq \sum_{i\in I^* \setminus I^t} \left(\frac{\sqrt{5}+3}{2}(x^*_i-z^{(t)}_i)^2+\frac{\sqrt{5}+1}{2}(z^{(t)}_i)^2\right)\nonumber\\
&\leq \sum_{i\in I^* \setminus I^t}\frac{\sqrt{5}+3}{2}(x^*_i-z^{(t)}_i)^2+\sum_{i\in I^t \setminus I^*}\frac{\sqrt{5}+1}{2}(z^{(t)}_i)^2\nonumber\\
&=\frac{\sqrt{5}+3}{2}\|z^{(t)}_{I^* \setminus I^t}-x^*_{I^* \setminus I^t}\|^2_{2}+\frac{\sqrt{5}+1}{2}\|z^{(t)}_{I^t \setminus I^*}\|^2_{2} \nonumber\\
&=\frac{\sqrt{5}+3}{2}\|z^{(t)}_{I^* \setminus I^t}-x^*_{I^* \setminus I^t}\|^2_{2}+\frac{\sqrt{5}+1}{2}\|z^{(t)}_{I^t \setminus I^*}-x^*_{I^t \setminus I^*}\|^2_{2}.
\label{hardEq2}
\end{align}
The first inequality holds by the following relation
$$
(a+b)^2=a^2+b^2+2ab\leq (1+\frac{\sqrt{5}+1}{2})a^2+(1+\frac{\sqrt{5}-1}{2})b^2
$$
for any $a, b\in \mathbf{R}$.
The second inequality holds due to the following facts:
\begin{enumerate}
\item[(a)]
for any $i\in {I^*\setminus I^t}, |z^t_i|\leq \tau^{(t)},$

\item[(b)]
for any $i\in {I^t \setminus I^*}, |z^t_i|\geq \tau^{(t)},$

\item[(c)]
$|I^*\setminus I^t|=|I^t \setminus I^*|$,
\end{enumerate}
and hence
$$\max_{i\in I^*\setminus I^t}|z^{(t)}_i|\leq \min_{i\in I^t \setminus I^*}|z^{(t)}_i|.$$
The last equality holds for $x^*_i=0, \forall i\in {I^t \setminus I^*}$.
Plugging (\ref{hardEq1.1}) and (\ref{hardEq2}) into (\ref{hardEq1}), we have
\begin{align*}
&\|x^{(t)}-x^*\|^2_2
\leq  \|z^{(t)}_{I^t}-x^*_{I^t}\|^2_{2}+\frac{\sqrt{5}+1}{2} \|z^{(t)}_{I^t \setminus I^*}-x^*_{I^t \setminus I^*}\|^2_{2}
\\\nonumber
&+\frac{\sqrt{5}+3}{2} \|z^{(t)}_{I^* \setminus I^t}-x^*_{I^* \setminus I^t}\|^2_{2}\\\nonumber
&=\|z^{(t)}_{I^t\bigcap I^*}-x^*_{I^t\bigcap I^*}\|^2_{2}+\frac{\sqrt{5}+3}{2} \|z^{(t)}_{I^t \setminus  I^*}-x^*_{I^t \setminus  I^*}\|^2_{2}
\\\nonumber
&+\frac{\sqrt{5}+3}{2}\|z^{(t)}_{I^* \setminus I^t}-x^*_{I^* \setminus I^t}\|^2_{2}\\\nonumber
&\leq \frac{\sqrt{5}+3}{2}\|z_{S^{t-1}}^{(t)}-x_{S^{t-1}}^*\|^2_{2},
\label{hard_Eq3}
\end{align*}
where $S^{t-1} = I^{t}_+\cup I^{t-1}\cup I^*$.
The last inequality holds because the sets $I^t\cap I^*$, $I^t \setminus I^*$ and $I^* \setminus I^t$ do not intersect with each other
and
$$
(I^t\cap I^*) \cup (I^t  \setminus I^*) \cup (I^* \setminus I^t) = (I^t \cup I^*) \subset (I^{t}_+ \cup I^*) \subset S^{t-1},
$$
and $\frac{\sqrt{5}+3}{2}>1.$
Therefore, the above inequality implies (\ref{UpperBound_Diff_RIP}).

Similar to the rest of the proof of Theorem {\ref{conv_gRIP}},
combining (\ref{Tigher_UpperBound_RIP}) and (\ref{UpperBound_Diff_RIP}),
we can conclude the proof of this theorem.
\end{proof}

\begin{IEEEbiographynophoto}{Yu Wang}
received the B.Sc. degree in Applied Mathematics in 2013 in Xi'an Jiaotong University, Shaanxi, China. From 2007 to 2009, he was a member of the Special Class of the Gifted Young in Xi'an Jiaotong University. From 2009 to 2013, he was in the Science Topnotch Program in China. He is currently pursuing M.Sc. degree in Applied Mathematics in Xi'an Jiaotong University.
\end{IEEEbiographynophoto}

\begin{IEEEbiographynophoto}{Jinshan Zeng}
received the B.S. degree in Information and Computing Sciencises from Xi'an Jiaotong University, Xi'an, in 2008. He is currently pursuing the Ph.D. degree with the School of Mathematics and Statistics, Xi'an Jiaotong University. He has been with the Department of Mathematics, University of California, Los Angeles, as a Visiting Scholar since Nov. 2013. His current research interests include sparse optimization, signal processing and synthetic aperture radar imaging.
\end{IEEEbiographynophoto}

\begin{IEEEbiographynophoto}{Zhimin Peng}
received his B.S. in computational mathematics from Xi'an Jiaotong University in 2011, and then M.A. in applied math from Rice University in 2013.
Now he is a Ph.D. student in the Department of Mathematics at UCLA. His research interests include developing efficient algorithms for anomaly detection and solving large scale convex optimization problems.
\end{IEEEbiographynophoto}

\begin{IEEEbiographynophoto}{Xiangyu Chang}
received the Ph.D. degree in applied mathematics from Xi'an Jiaotong University, China, in 2014.
He is currently an assistant professor at the school of management in Xi'an Jiaotong University, China.
His current research interests include statistical machine learning, high-dimensional statistics, and social network analysis.
\end{IEEEbiographynophoto}

\begin{IEEEbiographynophoto}{Zongben Xu}
received his Ph.D. degree in mathematics from Xi'an Jiaotong University, China, in 1987. He now serves as the Chief Scientist of National Basic Research Program of China (973 Project), and Director of the Institute for Information and System Sciences of the university. He is owner of the National Natural Science Award of China in 2007,and winner of CSIAM Su Buchin Applied Mathematics Prize in 2008. He delivered a 45 minute talk on the International Congress of Mathematicians 2010. He was elected as member of Chinese Academy of Science in 2011. His current research interests include intelligent information processing and applied mathematics.
\end{IEEEbiographynophoto}


\begin{thebibliography}{99}                                                                                               %

\bibitem{Donoho06}
D.L. Donoho, Compressed sensing,
IEEE Transactions on Information Theory, 52 (4): 1289-1306, 2006.

\bibitem{Candes06}
 E.J. Candes, J. Romberg, and T. Tao,
Robust uncertainty principles: exact signal reconstruction from highly incomplete frequency information,
 IEEE Transactions on Information Theory, 52 (2): 489-509, 2006.

\bibitem{MallatMP1993}
S. Mallat and Z. Zhang,
Matching pursuits with time-frequency dictionaries,
IEEE Transactions on Signal Processing, 41 (12): 3397-3415, 1993.



\bibitem{FoucartRIP2010}
S. Foucart, Sparse recovery algorithms: Sufficient conditions in terms of restricted isometry constants,
in Proceedings of the 13th International Conference on Approximation Theory,
M. Neantu and L. Schumaker, eds., San Antonio, TX, 2010, Springer.


\bibitem{Cevher2011}
V. Cevher, On accelerated hard thresholding methods for sparse approximation, Technical report, http://citeseerx.ist.psu.edu/viewdoc/summary? doi=10.1.1.205.725, 2011.


\bibitem{kyrillidis2011recipes}
A. Kyrillidis and V. Cevher, Recipes on hard thresholding methods,
in Proceedings of the 4th IEEE International Workshop on Computational Advances in Multi-Sensor Adaptive Processing (CAMSAP), pp. 353-356, 2011.



\bibitem{foucart2011hard}
S. Foucart, Hard thresholding pursuit: an algorithm for compressive sensing,
SIAM Journal on Numerical Analysis, 49(6), 2543-2563, 2011.


\bibitem{kyrillidis2012combinatorial}
A. Kyrillidis and V. Cevher, Combinatorial selection and least absolute
shrinkage via the CLASH algorithm, 2012 IEEE International Symposium on Information Theory Proceedings (ISIT), pp. 2216--2220, 2012.


\bibitem{Foucart2013Introduction}
S. Foucart and H. Rauhut, A mathematical introduction to compressive sensing, Berlin: Springer, 2013.


\bibitem{PatiOMP1993}
Y. Pati, R. Rezaifar and P. Krishnaprasad,
Orthogonal matching pursuit: recursive function approximatin with applications to wavelet decomposition,
In Asilomar Conf. Signals, Syst., Comput., Pacific Grove, CA, 1993.


\bibitem{TroppOMP2007}
J.A. Tropp and A. Gilbert,
Signal recovery from random measurements via orthogonal mathching pursuit,
IEEE Transactions on Information Theory, 2007, 53: 4655-4666.


\bibitem{DonohoStOMP}
D.L. Donoho, Y. Tsaig, O. Drori and J.-L. Starck,
Sparse solution of underdetermined systems of linear equations by stagewise orthogonal matching pursuit,
IEEE Transactions on Information Theory, 58 (2): 1094 - 1121, 2012.


\bibitem{Needell2010ROMP}
D. Needell and R. Vershynin, Signal recovery from incomplete and inaccurate measurements via Regularized Orthogonal Matching
Pursuit, IEEE Journal of Selected Topics in Signal Processing, 4: 310-316, 2010.


\bibitem{CoSaMP}
D. Needell and J.A. Tropp,
CoSaMP: Iterative signal recovery from incomplete and inaccurate samples,
Applied and Computational Harmonic Analysis, 26 (3): 301-321, 2008.


\bibitem{DaiSP}
W. Dai and O. Milenkovic,
Subspace pursuit for compressive sensing signal recontruction,
IEEE Transactions on Information Theory, 55 (5): 2230-2249, 2009.


\bibitem{Donoho98}
S.S. Chen, D.L. Donoho, and M. A. Saunders,
Atomic decomposition by basis pursuit, SIAM Journal on Scientific Computing, 20: 33-61, 1998.


\bibitem{Chartrand2007}
R. Chartrand, Exact reconstruction of sparse signals via nonconvex minimization.
IEEE Signal Processing Letters, 14 (10): 707-710, 2007.


\bibitem{Chartrand2008}
R. Chartrand and V. Staneva,
Restricted isometry properties and nonconvex compressive sensing, Inverse Problems, 24: 1-14, 2008.



\bibitem{L1/2TNN}
Z.B. Xu, X.Y. Chang, F.M. Xu and H. Zhang,
 $L_{1/2}$ regularization: a thresholding representation theory and a fast solver,
IEEE Transactions on Neural Networks and Learning Systems, 23: 1013-1027, 2012.


\bibitem{Candes2008RL1}
E.J. Candes, M.B. Wakin and S.P. Boyd,
Enhancing sparsity by reweighted $\ell_1$ minimization,
Journal of Fourier Analysis and Applications, 14 (5): 877-905, 2008.

\bibitem{FanSCAD}
J. Fan, and R. Li,
Variable selection via nonconcave penalized likelihood and its oracle properties, Journal of the American
Statistical Association, 96: 1348-1360, 2001.

\bibitem{ZhangMCP2010}
C.H. Zhang, Nearly unbiased variable selection under minimax concave penalty,
The Annals of Statistics, 38 (2): 894-942, 2010.


\bibitem{FOCUSS}
I.F. Gorodnitsky and B.D. Rao,
Sparse signal reconstruction from limited data using FOCUSS: a re-weighted minimum norm algorithm,
IEEE Transactions on Signal Processing, 45 (3): 600-616, 1997.

\bibitem{DaubechiesIRLS}
I. Daubechies, R. Devore, M. Fornasier and C.S. Gunturk,
Iteratively reweighted least squares minimization for sparse recovery,
Communications on Pure and Applied Mathematics, 63: 1-38, 2010.


\bibitem{DaubechiesSoft04}
I. Daubechies, M. Defries and C. De Mol,
An iterative thresholding algorithm for linear inverse problems with a sparisity constraint,
Communications on Pure and Applied Mathematics, 57: 1413-1457, 2004.


\bibitem{L2/3Cao2013}
W.F. Cao, J. Sun and Z.B. Xu, Fast image deconvolution using closed-form thresholding formulas of $L_q$ ($q=1/2,2/3$) regularization,
Journal of Visual Communication and Image Representation, 24: 31-41, 2013.


\bibitem{BlumensathHard08}
T. Blumensath and M.E. Davies,
Iterative thresholding for sparse approximation, Journal of Fourier Analysis and Application, 14 (5): 629-654, 2008.


\bibitem{ZengConvHalf2014}
J.S. Zeng, S.B. Lin, Y. Wang and Z.B. Xu,
$L_{1/2}$ regularization: convergence of iterative Half thresholding algorithm,
IEEE Transactions on Signal Processing, 62(9): 2317-2329, 2014.

\bibitem{ZengConvJumping2014}
J.S. Zeng, S.B. Lin and Z.B. Xu,
Sparse regularization: convergence of iterative jumping thresholding algorithm,
http://arxiv.org/abs/1402.5744, 2014.


\bibitem{Qian20011}
Y.T. Qian, S. Jia, J. Zhou and A. Robles-Kelly,
Hyperspectral unmixing via $L_{1/2}$ sparsity-constrained nonnegative matrix factorization,
IEEE Transactions on Geoscience and Remote Sensing, 49 (11): 4282-4297, 2011.

\bibitem{Zeng20012SAR}
J.S. Zeng, J. Fang, Z. B. Xu,
Sparse SAR imaging based on $L_{1/2}$ regularization,
Science  China Information Sciences, 55: 1755-1775, 2012.

\bibitem{Zeng20013AccSAR}
J.S. Zeng, Z. B. Xu, B.C. Zhang, W. Hong, Y.R. Wu.
Accelerated $L_{1/2}$ regularization based SAR imaging via BCR and reduced Newton skills,
Signal Processing, 93: 1831-1844, 2013.


\bibitem{Blumensath08CS}
T. Blumensath and M.E. Davies,
Iterative hard thresholding for compressed sensing,
Applied and Computational Harmonic Analysis, 27: 265-274, 2008.


\bibitem{MalekiITA2009}
A. Maleki, Coherence analysis of iteative thresholding algorithms,
in Forty-Seventh Annual Allerton Conference, Allerton House, UIUC, Illinois, USA, 2009.

\bibitem{ZengAIT2014}
J.S. Zeng, S.B. Lin and Z.B. Xu,
Sparse solution of underdetermined linear equations via adaptively iterative thresholding,
Signal Processing, 97: 152-161, 2014.

\bibitem{CaiCoherence}
T.T. Cai, G. Xu and J. Zhang,
On recovery of sparse signals via $\ell_1$ minimization,
IEEE Transactions on Information Theory, 55 (7): 3388-3397, 2009.


\bibitem {Candes08RIP}
E.M. Candes, The restricted isometry property and its implications for compressed sensing,
Comptes Rendus Mathematique, 346(9):589-592.

\bibitem{TroppOMPCoherence}
J.A. Tropp,
Greed is Good: Algorithmic Results for Sparse Approximation,
IEEE Transactions on Information Theory, 50 (10): 2231-2242, 2004.

\bibitem{DonohoCoherence}
D.L. Donoho and M. Elad,
Optimally sparse representation in general (nonorthogonal) dictionaries via $\ell_1$ minimization,
Proceedings of the National Academy of Sciences, 100 (5): 2197-2202, 2003.


\bibitem{WakinOMPRIP}
M.B. Wakin and M.A. Davenport,
Analysis of orthogonal matching pursuit using the restricted isometry property,
IEEE Transactions on Information Theory, 56 (9): 4395-4401, 2010.


\bibitem{CaiSharpRIP2014}
T.T. Cai, and A. R. Zhang,
Sparse representation of a polytope and recovery of a sparse signals and low-rank matrices,
IEEE Transactions on Information Theory, 60(1): 122 - 132, 2014.



\bibitem{IRLSLai2013}
M.J. Lai, Y.Y. Xu, and W.T. Yin,
Improved iteratively reweighted least squares for unconstrained smoothed $\ell_q$ minimization, SIAM J. Numer. Anal., 51: 927-957, 2013.


\bibitem{IRL1Chen}
X. Chen and W. Zhou, Convergence of the reweighted $l_{1}$
minimization algorithm for $l_{2}$-$l_{p}$ minimization, Comput.
Optim. Appl., 59: 47-61, 2014.


\bibitem {RudelsonRIP}
M. Rudelson and R. Vershynin, On sparse reconstruction
from Fourier and Gaussian measurements. Comm. Pure Appl. Math., 61: 1025-1045, 2008.

\bibitem {BaraniukRIP}
R. Baraniuk, M. Davenport, R. DeVore, and M. B. Wakin, A
simple proof of the restricted isometry property for random matrices.
Constr. Approx., 28: 253-263, 2008.


\bibitem{Blumensath2009}
T. Blumensath, M.E. Davis, Normalised Iterative Hard Thresholding;
guaranteed stability and performance, IEEE Journal of Selected Topics in Signal Processing, 4(2): 298-309, 2010.


\bibitem{Blanchard2013}
J.D. Blanchard, and J. Tanner, Performance comparisons of greedy algorithms in compressed sensing,
http://people.maths.ox.ac.uk/tanner/ papers/PCGACS.pdf, 2013.


\bibitem{Donoho06PhaseTransition}
D.L. Donoho, High-dimensional centrally symmetric polytopes with neighborliness proportional to dimension, Discrete \& Computational Geometry, 35(4): 617-652, 2006.


\bibitem{Sturm2011}
B.L. Sturm, M.G. Christensen, R. Gribonval, Cyclic pure greedy algorithms for recovering compressively sampled sparse signals,
2011 Conference Record of the Forty Fifth Asilomar Conference on Signals, Systems and Computers (ASILOMAR), pp.1143-1147, Nov. 2011.

\bibitem{Blanchard2014}
J.D. Blanchard, J. Tanner, and K. Wei, CGIHT: conjugate gradient iterative hard thresholding for compressed sensing and matrix completion,
IEEE Transactions on Signal Processing, 63(2): 528-537, 2015.


\bibitem{CSMPSP}
A. Maleki, D.L. Donoho, Optimally tuned iterative reconstruction algorithms for compressed sensing. IEEE Journal of Selected Topics in Signal Processing, 4(2):330-341, 2010

\bibitem{CompMP}
G. Rath and C. Guillemot, A complementary matching pursuit algorithm for sparse approximation, in Proceedings of European Signal Process,
Conference, Lausanne, Switzerland, Aug. 2008.


\bibitem{OLS}
L. Rebollo-Neira and D. Lowe, Optimized orthogonal matching pursuit
approach, IEEE Signal Processing Letters, 9(4):137-140, 2002.

\end{thebibliography}
\end{document}